\RequirePackage{fixltx2e}
\documentclass[a4paper]{isp}




\usepackage[T1]{fontenc}

\usepackage{lmodern}

\usepackage{amsmath}
\usepackage{url}

\ifnum\pdfoutput>0
\usepackage[
bookmarks=false,
breaklinks=true,
colorlinks=true,
linkcolor=black,
citecolor=black,
urlcolor=black,
pdfpagelayout=SinglePage
]{hyperref}
\usepackage[all]{hypcap}
\else
\usepackage{hyperref}
\fi




\usepackage{latexsym}
\usepackage{amsmath,amsthm}
\usepackage{amsfonts}
\usepackage{amssymb}
\usepackage{graphicx}
\usepackage{color}
\usepackage{epsfig}

\newtheorem{Proposition}{Proposition} 
 
\newtheorem{Lemma}{Lemma}
\newtheorem{Definition}{Definition}
\newtheorem{Corollary}{Corollary}

\newtheorem{Remark}[Proposition]{Remark}
\newtheorem{Theorem}[Proposition]{Theorem}


\begin{document}
\pdfgentounicode=1

\title{Ergodic Theorems for the Transfer Operators of \\ Noisy Dynamical Systems}
	 
\titlerunning{Ergodic Theorems of Noisy Systems}

\author{Eleonora Catsigeras}
\institute{ Instituto de Matem\'{a}tica y Estad\'{\i}stica "Rafael Laguardia", \\ Universidad de la Rep\'{u}blica,  Uruguay \\ \email{eleonora@fing.edu.uy}}




\maketitle

\begin{abstract}
We consider stationary stochastic dynamical systems evolving on a compact metric space, by perturbing a deterministic dynamics with a random noise, added  according to an arbitrary probabilistic distribution. We prove the maximal and pointwise ergodic theorems   for the transfer operators associated to such systems.  The  results  are extensions to noisy  systems of some of the fundamental  ergodic theorems for   deterministic systems. The  proofs are analytic. They follow  the  rigorous deductive method of the classic proofs in pure mathematics.
 
\keywords{Ergodic Theorems, Transfer Operator, Stochastic Systems, Noisy Dynamics.}
\end{abstract}

\section{Introduction}\label{sectionIntroduction}

The  ergodic theory of deterministic (zero-noise) dynamical systems is  based on a series of classical ergodic theorems. The first  of those theorems is the pointwise ergodic theorem of   Birkhoff-Khinchin \cite{Birkhoff} that ensures the almost sure convergence of the time averages (see also for instance \cite[Chapter II]{Mane}, \cite[Theorem 4.1.2]{Katok-Hasselblatt}, or \cite[Chapter 1, p. 11]{Cornfed-Fomin-Sinai}).
Also the  ergodic theorem of Kingman  \cite{Kingman}  ensures the pointwise convergence of the time averages, in a more general formulation, stating the convergence of any subadditive sequence of functions. For a recent more general statement and proof of the subadditive ergodic theorem see  \cite{Gouezel-Karlsson}.
For differentiable dynamical systems, the multiplicative ergodic theorem of Oseledets (\cite{Oseledets}) is also a fundamental result in the Ergodic Theory of deterministic dynamical systems, since, among other important consequences, it implies the existence of the Lyapunov exponents and gives a powerful tool  for the study of (differentiable) chaos. See also for instance \cite[Sections IV.10 and 11]{Mane}, \cite[Theorem 4.6.2]{yo}, or for a relatively short proof and subsequent generalizations of the multiplicative ergodic theorem, see \cite{Filip}.

The  proofs of the pointwise ergodic theorems are very different now a days  from the original proofs of Birkhoff, and independently of Khinchin, when they first discovered them in the decade of 1930. Mathematicians now deduce  the pointwise ergodic theorem as a particular case of  many other   more general results:  the maximal ergodic theorems \cite[Chapter IV]{Mane}, \cite[Theorems 2.2.5 and 2.2.6] {yo}, the subadditive ergodic theorem of Kingman and its generalization for cocycles \cite{Gouezel-Karlsson}, the operator theoretic ergodic theorems \cite{Eisner}, the entrangled ergodic theorem \cite{Eisner-Kunszenti-Kovacs}, the ergodic theorems for group actions \cite{Tempelman} and for noncommutative products \cite{Karlsson-Ledrappier}, and the ratio ergodic theorems \cite{Rugh-Thomine}, among others. Besides the ergodic theorems were also generalized for infinite measure spaces \cite{Aaronson}, \cite{Roy}. Also the classical   multiplicative ergodic theorem of Oseledets is now generalized in several forms; as for instance in Filip's extensions \cite{Filip},  Austin's multiplicative ergodic theorem \cite{Austin}, \cite{delaRue}, and Gonz\'{a}lez-Tokman-Quass multiplicative ergodic theorem for cocycles  \cite{Gonzalez-Tokman-Quas}.

Some of the results cited above apply only to deterministic (zero-noise) dynamical systems. Precisely, the question motivating this paper is: Are those ergodic theorems applicable or extendable also to stochastic or noisy systems?
In fact, some of them already have adapted statements that are applicable to stochastic processes, Markovian systems, or random transformations systems (RTS). For instance, very early, the pointwise ergodic theorem for Makovian processes was proved by Kakutani \cite{Kakutani} (see also \cite[Theorem 6, p. 388]{Yosida} and \cite[Corollary 2.2, p. 24]{Kifer}). And much later,  the multiplicative ergodic theorem for RTS is stated and proved in \cite[Chapter III, p. 88]{Kifer}, and also in \cite[Chapter 4]{Arnold}.

The purpose of  this paper is precisely to state and prove the maximal and pointwise ergodic theorems for stochastic dynamical systems. They are Theorems \ref{TheoremMaximalErgodic} and \ref{MainTheorem1},  and Corollaries \ref{corollaryMaximalErgTheo} and \ref{corollaryMaximalErgTheo2}.  As a consequence, we also  obtain  a different  proof of Kakutani's ergodic theorem, when applied to a transfer operator (see Theorem \ref{TheoremMainErgodic}).

 \subsection{Setting up} 

Along this paper, $X$ is a compact metric space and  ${\mathcal A}$ is the Borel sigma-algebra  in   $X$. We denote by ${\mathcal M}$   the space of all the probability measures on  $( X, {\mathcal A})$  endowed with the weak$^*$-topology. We  consider the functional space $C^0(X, \mathbb{C})$ of all the continuous functions $\varphi: X \mapsto \mathbb{C}$ with the supremum norm. We denote by $C^0(X, \mathbb{R})$   the subspace of real continuous functions, and by $C^0(X, [0,1])$    its   subset of    functions whose values belong to the interval $[0,1]$.

A stochastic dynamical system on $X$ is a stochastic process $x_0, x_1, \ldots, x_n, \ldots$ with any given initial  probability distribution  $ \mu_0 \in {\mathcal M}$ such that $$ \mu_0(A) = \mbox{prob}\{x_0 \in A\} \ \ \forall \ A \in {\mathcal A},$$ and a family $\{P(x, \cdot\}_{x \in X} \subset {\mathcal M}$ of   \em transition probabilities \em $P(x, \cdot)$ (also called \em probabilities of noise\em) such that
$$ P(x, A) = \mbox{prob}\{x_{n+1} \in A / x_n = x\} \ \ \forall \ n \geq 0, \ \ \forall \ A \in {\mathcal A}, \ \ \forall \ x \in X.$$ The stochastic dynamical system is continuous if the application $x \in X \mapsto P(x, \cdot ) \in {\mathcal M}$ is continuous in the weak$^*$-topology.

When studying the   properties of continuous stochastic dynamical systems, the   following transfer operator $ {\mathcal L}: C^0(X, {\mathbb{C}}) \mapsto C^0(X, {\mathbb{C}})$  and its dual transfer operator $ {\mathcal L}^*: {\mathcal M} \mapsto {\mathcal M}$ are usually considered:
$$({\mathcal L} \varphi)(x) := \int \varphi (y) P(x, dy)  \ \   \forall \ x \in X, \ \ \forall \ \varphi \in C^0(X, {\mathbb{C}});$$
$$ \int \varphi \, d ({\mathcal L}^* \mu) := \int ({\mathcal L} \varphi) \, d \mu \ \ \forall \  \varphi \in C^0(X, {\mathbb{C}}), \ \ \forall \ \mu \in {\mathcal M}.$$
The ergodic properties of the continuous stochastic dynamical system   rely on the   convergence $\mu$-a.e. (when it occurs) of the time averages $ \frac{1}{n}\sum_{j= 0}^{n-1}({\mathcal L}^j \varphi)$, and also, on the properties of the limits of weak$^*$-convergent subsequences of   $ \frac{1}{n}\sum_{j= 0}^{n-1}({{\mathcal L}^*}^j \mu)$.

 We will start by considering any  operator ${\mathcal L}$ from $ C^0(X, \mathbb{C})  $ to itself, that is positive, bounded by 1, and such that ${\mathcal L}(1)= 1$. As said above, along this paper we will study the ergodic properties of the iteration of ${\mathcal L}$, and of its dual operator ${\mathcal L}^*$ on the space ${\mathcal M}$ of probability measures. A priori, ${\mathcal L}$ is not constructed as the transfer operator of a stochastic dynamical system. Nevertheless, in Section \ref{sectionPreviousResults}-Proposition \ref{proposition-P(x,cdot)}, we will show that there exists such a  stochastic  system  whose transfer operator coincides with the given ${\mathcal L}$.

\subsection{Definitions}

\begin{Definition} \label{DefinitionTransferOperatorL}
{\bf (Transfer Operator ${\mathcal L}$  in the space of continuous functions.)} \em

 A \em Transfer Operator ${\mathcal L}$ in the space  of continuous functions \em   is a linear operator $${\mathcal L}: C^0(X, \mathbb{C}) \mapsto  C^0(X, \mathbb{C}) $$ such that:

 \noindent {\bf \ref{DefinitionTransferOperatorL}.1 }  ${\mathcal L}$ is positive; precisely  ${\mathcal L} \varphi$ is real and non negative  if $\varphi$ is real and non negative.

 \noindent {\bf \ref{DefinitionTransferOperatorL}.2 }    $\|{\mathcal L}\| = 1$; precisely $\max_{x \in X}|({\mathcal L} \varphi) (x)| \leq  \max_{x \in X} |\varphi(x)|$ for all $\varphi \in C^0(X, \mathbb{C})$, and  ${\mathcal L}   \cdot 1 = 1$.
\end{Definition}

\begin{Definition}
{\bf (Transfer Operator ${\mathcal L}^*$  in the space of probability measures.)} \em

For any   transfer operator ${\mathcal L}: C^0(X, \mathbb{C}) \mapsto C^0(X, \mathbb{C})$, the \em Dual Transfer Operator ${\mathcal L}^*$ in the space ${\mathcal M}$ of probabilities \em is the application ${\mathcal L}^*: {\mathcal M} \mapsto {\mathcal M}$   defined by

  \begin{equation} \label{eqn000}
  \int \varphi \, d{\mathcal L}^* \mu = \int  {\mathcal L} \varphi  \, d \mu \ \ \ \ \forall \ \varphi \in C^0(X, \mathbb{C}) \ \ \ \ \ \forall \ \mu \in {\mathcal M}.\end{equation}

Due to Riesz Theorem, the dual transfer operator ${\mathcal L}^*$ in the space of probability measures  exists and is unique for any given transfer operator ${\mathcal L}$ in the space of continuous functions.
\end{Definition}

We are particularly interested in those probability measures that are fixed by the dual transfer operator ${\mathcal L}^*$, and more generally, in those probability measures $\mu$ that are ${\mathcal L}^*$-periodic with period $p \geq 1$; i.e. fixed by ${{\mathcal L}^*}^p$ for a minimum natural number $p \geq 1$.

\vspace{.2cm}

\noindent {\bf Extension of the operator ${\mathcal L}$ to the space $L_\infty$.}

In Section \ref{sectionPreviousResults}-Proposition \ref{propositionCalL:L_inftyToL_infty},  we prove that the transfer operator ${\mathcal L}$ can be extended to the space $L_{\infty}$ of bounded measurable functions  in such a way that the following equality holds:
$$
  \int \varphi \, d{\mathcal L}^* \mu = \int  {\mathcal L} \varphi  \, d \mu \ \ \ \ \forall \ \varphi \in L_{\infty} \ \ \ \ \ \forall \ \mu \in {\mathcal M}.$$

\begin{Definition}
 \label{DefinitionInvariantSets} {\bf (Invariant sets almost everywhere)} \em

 Let $\mu \in {\mathcal M}$ and $A \in {\mathcal A}$. We say that $A$ is $\mu$-a.e. ${\mathcal L}$-invariant if
$${\mathcal L}\chi_A = \chi_A \ \ \ \mu-a.e.,$$
where $\chi_A$ denotes the characteristic function of $A$.
Analogously, the measurable set $A$ is $\mu$-a.e. ${\mathcal L}^p$-invariant for a natural number $p \geq 1$, if
$${\mathcal L}^p \chi_A = \chi_A \ \ \ \mu-a.e.$$\end{Definition}

\begin{Definition} \em \label{definitionErgodic} {\bf (Ergodic measures).}

Let $\mu \in {\mathcal M}$. We say that $\mu$ is \em ergodic  for ${\mathcal L}^*$ \em  if it is fixed by ${\mathcal L}^*$ and if $\mu(A) \in \{0,1\}$ for any set $A \subset M$ that is ${\mathcal L}$-invariant $\mu$-a.e. In other words, if
$\chi_A(x) = ({\mathcal L} \chi_A)(x)$  for $\mu$-a.e. $x \in X$, then either $\mu(A) = 0$ or $\mu(A) = 1$.

Analogously, for any natural number $p \geq 1$, we say that   $\mu$ is \em ergodic  for ${{\mathcal L}^*}^p$ \em  if it is fixed by ${{\mathcal L}^*}^p$ and if $\mu(A) \in \{0,1\}$ for any set $A \subset M$ that is ${\mathcal L}^p$-invariant $\mu$-a.e.
\end{Definition}

\subsection{Statement of the Results.}

The   purpose of this paper is to prove the following results:

\begin{Theorem} {\bf (Maximal Ergodic Theorem)} \label{TheoremMaximalErgodic}

Let $\mu \in {\mathcal M}$ such that ${\mathcal L}^* \mu = \mu$. Let $\varphi \in L_{\infty}$ be a real function. Define
\begin{eqnarray}
\nonumber
\varphi_n(x) &:= &\max\{\varphi(x), (\varphi + {\mathcal L} \varphi)(x), \ldots, (\varphi + {\mathcal L} \varphi + \ldots +{\mathcal L}^{n-1} \varphi)(x)\};\\ \nonumber
  E(\varphi) &:= &\{ x \in X \colon \sup_{n \geq 1} \varphi_n(x) > 0\}. \end{eqnarray}
 Then
 \begin{equation}
 \label{eqnTeoErgMaximal}
 \int_{E(\varphi)} \varphi \, d \mu \geq 0. \end{equation}
\end{Theorem}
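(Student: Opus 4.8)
The plan is to adapt the classical Garsia argument for the maximal ergodic theorem. That argument is purely operator-theoretic: it uses only linearity, the positivity of $\mathcal{L}$ (axiom \ref{DefinitionTransferOperatorL}.1), and the fixed-point identity for $\mu$, and never any multiplicativity of the operator. It should therefore transfer verbatim from the Koopman operator of a measure-preserving map to the present transfer operator. Write $S_k\varphi := \varphi + \mathcal{L}\varphi + \cdots + \mathcal{L}^{k-1}\varphi$, so that $\varphi_n = \max_{1\le k\le n} S_k\varphi$, and set $\varphi_n^+ := \max\{\varphi_n, 0\}$. The single identity driving everything is the recursion
$$S_{k+1}\varphi = \varphi + \mathcal{L}(S_k\varphi),$$
which is immediate from the linearity of $\mathcal{L}$.

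First I would establish the pointwise inequality $\varphi \ge \varphi_n - \mathcal{L}(\varphi_n^+)$. Since $\varphi_n^+ \ge S_k\varphi$ for every $1\le k\le n$ and $\mathcal{L}$ is positive, hence monotone (in its $L_\infty$ extension), applying $\mathcal{L}$ and adding $\varphi$ gives $\varphi + \mathcal{L}(\varphi_n^+) \ge \varphi + \mathcal{L}(S_k\varphi) = S_{k+1}\varphi$ for $1\le k\le n-1$, so $\varphi + \mathcal{L}(\varphi_n^+) \ge \max\{S_2\varphi,\ldots,S_n\varphi\}$. Because $\varphi_n^+\ge 0$ forces $\mathcal{L}(\varphi_n^+)\ge 0$, one also has $\varphi + \mathcal{L}(\varphi_n^+)\ge \varphi = S_1\varphi$, and combining these yields $\varphi + \mathcal{L}(\varphi_n^+) \ge \varphi_n$, i.e. $\varphi \ge \varphi_n - \mathcal{L}(\varphi_n^+)$.

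Next I would integrate this inequality over $E_n := \{x : \varphi_n(x) > 0\}$ against $\mu$. On $E_n$ one has $\varphi_n = \varphi_n^+$ while off $E_n$ one has $\varphi_n^+ = 0$, so $\int_{E_n}\varphi_n\, d\mu = \int_X \varphi_n^+\, d\mu$; meanwhile $\mathcal{L}(\varphi_n^+)\ge 0$ gives $\int_{E_n}\mathcal{L}(\varphi_n^+)\, d\mu \le \int_X \mathcal{L}(\varphi_n^+)\, d\mu$. Here is the crucial use of the hypothesis $\mathcal{L}^*\mu = \mu$ in its $L_\infty$ form, namely $\int_X \mathcal{L}(\varphi_n^+)\, d\mu = \int_X \varphi_n^+\, d\mu$. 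Combining the three relations,
$$\int_{E_n}\varphi\, d\mu \ge \int_X \varphi_n^+\, d\mu - \int_X \mathcal{L}(\varphi_n^+)\, d\mu = 0.$$
Finally, since $\varphi_n$ is nondecreasing in $n$, the sets $E_n$ increase to $E(\varphi) = \bigcup_n E_n$, and as $\varphi\in L_\infty$ with $\mu$ a probability, dominated convergence lets me pass to the limit, giving $\int_{E(\varphi)}\varphi\, d\mu = \lim_n \int_{E_n}\varphi\, d\mu \ge 0$.

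The genuinely new work beyond the deterministic case lies not in the algebra but in the functional-analytic bookkeeping: I must know that $\mathcal{L}$ extends to $L_\infty$ preserving both positivity and the duality identity $\int \mathcal{L}\varphi\, d\mu = \int\varphi\, d\mu$, and that $\varphi_n^+$ is a bounded measurable function to which these apply. This is precisely what the extension of Proposition \ref{propositionCalL:L_inftyToL_infty} supplies, so the main obstacle reduces to invoking that extension correctly; the measurability of $\varphi_n$ (a finite maximum of bounded measurable functions) and the dominated-convergence step are then routine.
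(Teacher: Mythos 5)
Your proof is correct, but it follows a genuinely different route from the paper's. The paper proves the bound $I_n:=\int_{E_n}\varphi\,d\mu\ge 0$ by splitting $E_n=\{\varphi_n>0\}$ into the two pieces $\{\varphi_n>0,\ {\mathcal L}\varphi_n\le 0\}$ (where it shows $\varphi_n=\varphi$) and $\{\varphi_n>0,\ {\mathcal L}\varphi_n>0\}$ (where it shows $\varphi\ge\varphi_n-{\mathcal L}\varphi_n$), and then closes the estimate with a separate preparatory lemma asserting $\int_{\varphi>0}\varphi\,d\mu\ge\int_{{\mathcal L}\varphi>0}{\mathcal L}\varphi\,d\mu$, which in turn rests on the inequality ${\mathcal L}(\varphi^+)\ge({\mathcal L}\varphi)^+$; this is the Hopf/Yosida--Kakutani-style argument. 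You instead run Garsia's argument: by working with ${\mathcal L}(\varphi_n^+)$ rather than ${\mathcal L}\varphi_n$, you get the single pointwise inequality $\varphi\ge\varphi_n-{\mathcal L}(\varphi_n^+)$ valid on all of $X$, so no case-splitting is needed, and one integration over $E_n$ together with $\int_{E_n}\varphi_n\,d\mu=\int_X\varphi_n^+\,d\mu$, the positivity bound $\int_{E_n}{\mathcal L}(\varphi_n^+)\,d\mu\le\int_X{\mathcal L}(\varphi_n^+)\,d\mu$, and the invariance identity $\int{\mathcal L}(\varphi_n^+)\,d\mu=\int\varphi_n^+\,d\mu$ finishes the proof. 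Your version is shorter and needs only monotonicity of the extended operator plus the duality identity of Proposition \ref{propositionCalL:L_inftyToL_infty}, dispensing entirely with the paper's Lemmas \ref{lemma22} and \ref{lemmaTeoMaximalErgodic}; the paper's longer route has the side benefit that Lemma \ref{lemma22} is reused later (in the proof of Lemma \ref{lemmaInvariantFunctions} for Kakutani's theorem), so it is not wasted work there. Both proofs share the same reduction to $E_n\uparrow E(\varphi)$ with dominated convergence, and both lean on the same $L_\infty$-extension machinery, which you invoke correctly.
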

\begin{Corollary}
\label{corollaryMaximalErgTheo}
Let $\mu \in {\mathcal M}$ such that ${\mathcal L}^* \mu = \mu$. Let $\varphi: X \mapsto \mathbb{R}$ be bounded and measurable. For each natural number $n \geq 1$ and each real number $\alpha$, denote:
\begin{eqnarray}
\nonumber
\varphi_n &:= &\sum_{j= 0}^{n-1} {\mathcal L}^j \varphi.\\  \nonumber
C_{\alpha} & := & \{x \in X \colon  \sup_{n \geq 1} \frac{ \varphi_n (x)}{n} > \alpha \}.\end{eqnarray}
Then, for any measurable set $A \subset C_{\alpha}$ such that $ ({\mathcal L} \chi_A)(x)   = \chi_A (x) $ for $\mu$-a.e. $x \in X$, the following inequality holds:
$$\int_A \varphi \, d \mu \geq \alpha \cdot \mu(A).$$
\end{Corollary}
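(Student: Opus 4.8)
The plan is to deduce the corollary from the Maximal Ergodic Theorem (Theorem \ref{TheoremMaximalErgodic}) applied to the auxiliary function
$$\psi := (\varphi - \alpha)\,\chi_A \in L_\infty,$$
and then to identify the set $E(\psi)$ produced by that theorem with $A$ up to a $\mu$-null set. Two facts make this work. First, since $\mathcal{L} 1 = 1$ we have $\mathcal{L}^j \alpha = \alpha$ for every $j$, so the constant can be absorbed: writing $S_k g := \sum_{j=0}^{k-1}\mathcal{L}^j g$ for the ergodic sum (so that the corollary's $\varphi_k$ is $S_k\varphi$, not to be confused with the maximum-function $\varphi_n$ of the theorem), one gets $S_k(\varphi-\alpha) = \varphi_k - k\alpha$. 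Second, and this is the heart of the argument, the $\mu$-a.e.\ invariance of $A$ should let me localise the operator on $A$, in the sense that $\mathcal{L}^j(\chi_A h) = \chi_A\,\mathcal{L}^j h$ $\mu$-a.e.\ for every bounded measurable $h$ and every $j \ge 0$. Granting this localisation for $h = \varphi - \alpha$ yields $S_k\psi = \chi_A\,(\varphi_k - k\alpha)$ $\mu$-a.e.

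To establish the localisation I would proceed in two steps. I first note that $\mathcal{L}$ is well defined on $\mu$-equivalence classes: if $f = 0$ $\mu$-a.e., then by the $L_\infty$-duality $\int \mathcal{L}|f|\,d\mu = \int |f|\,d\mathcal{L}^*\mu = \int |f|\,d\mu = 0$ (Proposition \ref{propositionCalL:L_inftyToL_infty} together with $\mathcal{L}^*\mu = \mu$) and the inequality $|\mathcal{L} f| \le \mathcal{L}|f|$ from positivity, one concludes $\mathcal{L} f = 0$ $\mu$-a.e. For the base case $j = 1$, I use $\mathcal{L}\chi_A = \chi_A$ and hence $\mathcal{L}\chi_{A^c} = \mathcal{L}(1-\chi_A) = 1 - \mathcal{L}\chi_A = \chi_{A^c}$ $\mu$-a.e. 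For $0 \le h \le 1$, monotonicity of $\mathcal{L}$ gives $0 \le \mathcal{L}(\chi_A h) \le \mathcal{L}\chi_A = \chi_A$ and $0 \le \mathcal{L}(\chi_{A^c} h) \le \chi_{A^c}$ $\mu$-a.e., so that $\mathcal{L}(\chi_A h)$ is supported on $A$ and $\mathcal{L}(\chi_{A^c}h)$ on $A^c$; multiplying $\mathcal{L} h = \mathcal{L}(\chi_A h) + \mathcal{L}(\chi_{A^c} h)$ by $\chi_A$ then gives $\chi_A\mathcal{L} h = \mathcal{L}(\chi_A h)$ $\mu$-a.e. The general bounded $h$ follows by linearity and scaling, and the case of general $j$ follows by induction, the well-definedness on equivalence classes being exactly what permits applying $\mathcal{L}$ to the $\mu$-a.e.\ identity $\mathcal{L}^j(\chi_A h) = \chi_A\mathcal{L}^j h$.

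With $S_k\psi = \chi_A(\varphi_k - k\alpha)$ $\mu$-a.e.\ in hand, I compute $E(\psi) = \{x : \sup_{k \ge 1} S_k\psi(x) > 0\}$ up to $\mu$-null sets. Off $A$ all the $S_k\psi$ vanish, while on $A$ one has $\sup_{k \ge 1}(\varphi_k - k\alpha) > 0 \iff \sup_{k \ge 1}\varphi_k/k > \alpha \iff x \in C_\alpha$; since $A \subset C_\alpha$, this forces $E(\psi) = A$ up to a $\mu$-null set. Because $\psi$ vanishes off $A$, Theorem \ref{TheoremMaximalErgodic} then reads
$$0 \le \int_{E(\psi)} \psi\,d\mu = \int_A (\varphi - \alpha)\,d\mu = \int_A \varphi\,d\mu - \alpha\,\mu(A),$$
which is precisely the asserted inequality.

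The step I expect to be the main obstacle is the localisation identity, and in particular its inductive passage from $j$ to $j+1$: the mere forward invariance of $A$ does not suffice, for one must also know that $\mathcal{L}$ is compatible with $\mu$-a.e.\ equality, in order to apply it to an identity that holds only $\mu$-almost everywhere. This compatibility is exactly where the standing hypothesis $\mathcal{L}^*\mu = \mu$, through the $L_\infty$-extension of Proposition \ref{propositionCalL:L_inftyToL_infty}, enters the argument.
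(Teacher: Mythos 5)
Your proof is correct, and at the structural level it follows the same route as the paper: both deduce the corollary from Theorem \ref{TheoremMaximalErgodic} applied to a function localised on $A$, after showing that the corresponding maximal set $E$ coincides with $A$ up to a $\mu$-null set. The paper proceeds in two stages --- first the case $\alpha=0$, applying the theorem to $\chi_A\varphi$, then general $\alpha$ by passing to $\varphi-\alpha$ and using $\mathcal{L}\alpha=\alpha$ --- and the composition of these two stages is exactly your single application to $\psi=(\varphi-\alpha)\chi_A$; that difference is cosmetic. What is genuinely different is how you establish the localisation identity. The paper's Lemma \ref{lemmaForCorollaryMaximalErgTheo} proves $\mathcal{L}(\chi_A\varphi)=\chi_A\,\mathcal{L}\varphi$ $\mu$-a.e.\ through the kernel representation of Proposition \ref{proposition-P(x,cdot)}: the invariance $\mathcal{L}\chi_A=\chi_A$ $\mu$-a.e.\ forces $P(x,A)\in\{0,1\}$ $\mu$-a.e., and one splits $\int\varphi(y)\,P(x,dy)$ over $A$ and $X\setminus A$; notably, that lemma holds for arbitrary $\mu$, with no invariance hypothesis. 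Your argument is kernel-free, using only positivity, linearity and $\mathcal{L}1=1$ via the support estimates $0\le\mathcal{L}(\chi_A h)\le\chi_A$ and $0\le\mathcal{L}(\chi_{X\setminus A}\,h)\le\chi_{X\setminus A}$ $\mu$-a.e.; this is a valid and somewhat more operator-theoretic alternative. More importantly, you correctly isolate the point the paper passes over in silence: the corollary needs the identity for all powers $\mathcal{L}^j$, and the induction from $j$ to $j+1$ requires that $\mathcal{L}$ respect $\mu$-a.e.\ equality, which holds precisely because $\mathcal{L}^*\mu=\mu$, by the argument $\int\mathcal{L}|f|\,d\mu=\int|f|\,d\mathcal{L}^*\mu=\int|f|\,d\mu=0$ combined with $|\mathcal{L}f|\le\mathcal{L}|f|$. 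The paper's proof cites the $j=1$ lemma and writes the $n$-term identity $g_n=\sum_{j=0}^{n-1}\mathcal{L}^j(\chi_A\varphi)$ directly; your write-up supplies the missing inductive justification, so on this point it is more complete than the published argument.
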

\begin{Corollary}
\label{corollaryMaximalErgTheo2}
Let $\mu \in {\mathcal M}$ such that ${\mathcal L}^* \mu = \mu$. Let $\varphi: X \mapsto \mathbb{R}$ be bounded and measurable. For each natural number $n \geq 1$ and each real number $\beta$, denote:
\begin{eqnarray}
\nonumber
\varphi_n &:= &\sum_{j= 0}^{n-1} {\mathcal L}^j \varphi.\\ \nonumber
B_{\beta} & := & \{x \in X \colon  \inf_{n \geq 1} \frac{ \varphi_n (x)}{n} < \beta \}.\end{eqnarray}
Then, for any measurable set $A \subset B_{\beta}$ such that $ ({\mathcal L} \chi_A)(x)   = \chi_A (x) $ for $\mu$-a.e. $x \in X$, the following inequality holds:
$$\int_A \varphi \, d \mu \leq \beta \cdot \mu(A).$$

\end{Corollary}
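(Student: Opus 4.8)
The plan is to deduce this corollary directly from the preceding Corollary~\ref{corollaryMaximalErgTheo} via the symmetry $\varphi \mapsto -\varphi$, which exchanges the roles of suprema and infima and reverses the direction of the inequality. Since Corollary~\ref{corollaryMaximalErgTheo} already controls the set on which the partial sums $\varphi_n/n$ are \emph{large}, the natural way to control the set on which they are \emph{small} is to apply that result to the reflected function $-\varphi$ with the sign of the threshold flipped.

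Concretely, I would set $\psi := -\varphi$ and $\alpha := -\beta$. Because $\mathcal{L}$ is linear (Definition~\ref{DefinitionTransferOperatorL}), we have $\mathcal{L}^j \psi = -\mathcal{L}^j \varphi$ for every $j \geq 0$, hence $\psi_n := \sum_{j=0}^{n-1}\mathcal{L}^j \psi = -\varphi_n$ and $\psi_n(x)/n = -\varphi_n(x)/n$ for all $x$ and all $n \geq 1$. Taking suprema gives $\sup_{n \geq 1}\psi_n(x)/n = -\inf_{n \geq 1}\varphi_n(x)/n$, so that the strict inequality $\inf_{n \geq 1}\varphi_n(x)/n < \beta$ is equivalent to $\sup_{n \geq 1}\psi_n(x)/n > \alpha$. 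Thus the set $B_{\beta}$ associated to $\varphi$ coincides exactly with the set $C_{\alpha}$ associated to $\psi$, and the two strict inequalities match up without any loss at the boundary. Moreover $\psi$ is again bounded and measurable, and the hypothesis $(\mathcal{L}\chi_A)(x)=\chi_A(x)$ for $\mu$-a.e.\ $x$ does not involve $\varphi$, so it carries over unchanged to the application of the previous corollary.

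With these identifications in place, I would apply Corollary~\ref{corollaryMaximalErgTheo} to the function $\psi$, the real number $\alpha$, and the set $A \subset C_{\alpha} = B_{\beta}$, obtaining $\int_A \psi \, d\mu \geq \alpha \cdot \mu(A)$, that is, $-\int_A \varphi \, d\mu \geq -\beta\cdot \mu(A)$. Multiplying by $-1$ reverses the inequality and yields the desired conclusion $\int_A \varphi\, d\mu \leq \beta\cdot\mu(A)$. There is essentially no analytic obstacle here once Corollary~\ref{corollaryMaximalErgTheo} is available; the only point demanding a little care is the bookkeeping—checking that the \emph{strict} inequalities defining $B_{\beta}$ and $C_{\alpha}$ correspond under $\varphi\mapsto-\varphi$ (they do, since $t<\beta$ is equivalent to $-t>-\beta$), and that the sign flips on both the integral and the $\mu(A)$ term are consistent. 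Should one prefer a self-contained argument, the same maximal-inequality machinery underlying Corollary~\ref{corollaryMaximalErgTheo} could be rerun with the infimum replacing the supremum, but the reflection argument is shorter and avoids duplicating that work.
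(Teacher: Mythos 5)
Your proposal is correct and is exactly the argument the paper itself uses: apply Corollary~\ref{corollaryMaximalErgTheo} to $-\varphi$ with threshold $-\beta$ and flip the inequality. Your additional bookkeeping (checking that $B_{\beta}$ for $\varphi$ coincides with $C_{-\beta}$ for $-\varphi$, and that the invariance hypothesis on $A$ is unaffected) only makes explicit what the paper leaves implicit.
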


 \begin{Theorem} \label{MainTheorem1} {\bf (Pointwise Ergodic Theorem for Periodic Measures by the Transfer Operator)}

 Let $\mu $ be a probability measure fixed by the transfer operator ${{\mathcal L}^*}^p$ for some natural number $p \geq 1$. Then, for any function $\varphi \in L_{\infty}$ the following limit exists $\mu$-a.e.:
 $$\widetilde \varphi_p (x) := \lim_{n \rightarrow + \infty} \frac{1}{n} \sum_{j=0}^{n-1} {\mathcal L}^{jp} \varphi )(x) \ \ \ \mu\mbox{-a.e. } x \in X. $$
 If besides $\mu$ is ergodic for ${{\mathcal L}^*}^p$, then
 $$\widetilde \varphi_p(x) = \int \varphi \, d \mu \ \  \mu\mbox{-a.e. } x \in X.$$
 \end{Theorem}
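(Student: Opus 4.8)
The plan is to deduce pointwise convergence from the maximal inequalities of Corollaries \ref{corollaryMaximalErgTheo} and \ref{corollaryMaximalErgTheo2}, following the classical scheme by which Birkhoff's theorem is extracted from the maximal ergodic theorem, but carried out for an averaging operator rather than for a point map. First I would reduce to the case $p=1$: since $\mathcal{L}^{p}$ is again positive, satisfies $\mathcal{L}^{p}1=1$ and $\|\mathcal{L}^{p}\|\le 1$, it is a transfer operator in the sense of Definition \ref{DefinitionTransferOperatorL}, its dual is ${\mathcal{L}^{*}}^{p}$, and $\mathcal{L}^{jp}=(\mathcal{L}^{p})^{j}$. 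Writing $T:=\mathcal{L}^{p}$, the hypothesis becomes $T^{*}\mu=\mu$, so the two corollaries and Theorem \ref{TheoremMaximalErgodic} apply verbatim to $T$. I then set
$$\overline{\varphi}:=\limsup_{n}\frac{1}{n}\sum_{j=0}^{n-1}T^{j}\varphi,\qquad \underline{\varphi}:=\liminf_{n}\frac{1}{n}\sum_{j=0}^{n-1}T^{j}\varphi,$$
both bounded by $\|\varphi\|_{\infty}$, and the goal becomes to show $\overline{\varphi}=\underline{\varphi}$ $\mu$-a.e.

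The key preliminary step is to prove that $\overline{\varphi}$ and $\underline{\varphi}$ are fixed by $T$ and that their level sets are $T$-invariant $\mu$-a.e. Setting $A_{n}:=\frac{1}{n}\sum_{j=0}^{n-1}T^{j}\varphi$, a telescoping computation gives $TA_{n}=A_{n}+\frac{1}{n}(T^{n}\varphi-\varphi)$, where the error tends to $0$ uniformly because $\|T^{n}\varphi\|_{\infty}\le\|\varphi\|_{\infty}$. Using the monotone continuity of the $L_{\infty}$-extension of $T$ (its integral representation against the transition probabilities $P_{T}(x,\cdot)$ associated to $T$ by Proposition \ref{proposition-P(x,cdot)}, guaranteed by Proposition \ref{propositionCalL:L_inftyToL_infty}), a reverse-Fatou estimate gives $\limsup_{n}TA_{n}\le T\overline{\varphi}$, while $\limsup_{n}TA_{n}=\limsup_{n}A_{n}=\overline{\varphi}$ since the error is uniformly small; hence $T\overline{\varphi}\ge\overline{\varphi}$ $\mu$-a.e. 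Integrating and using $\int Th\,d\mu=\int h\,d\mu$ (the extended duality with $T^{*}\mu=\mu$) upgrades this to $T\overline{\varphi}=\overline{\varphi}$ $\mu$-a.e., and symmetrically $T\underline{\varphi}=\underline{\varphi}$. To pass from the fixed function to its level sets I invoke Jensen's inequality for the averaging operator $T$: for $g\in\{\overline{\varphi},\underline{\varphi}\}$ one has $T(g^{2})\ge (Tg)^{2}=g^{2}$ with equal $\mu$-integrals, so $T(g^{2})=g^{2}$ $\mu$-a.e.; that is, the conditional variance of $g$ under $P_{T}(x,\cdot)$ vanishes for $\mu$-a.e. $x$, forcing $g$ to be $P_{T}(x,\cdot)$-a.s. equal to $g(x)$ and hence $T\chi_{\{g>\alpha\}}=\chi_{\{g>\alpha\}}$ $\mu$-a.e. for every real $\alpha$.

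With the invariance in hand the conclusion follows the classical argument. For rationals $\alpha>\beta$ put $E_{\alpha,\beta}:=\{\underline{\varphi}<\beta\}\cap\{\overline{\varphi}>\alpha\}$. Each factor is $T$-invariant $\mu$-a.e., and the $T$-invariant sets form an algebra modulo $\mu$ (immediate from $T1=1$ and the fact that $P_{T}(x,\cdot)$ charges intersections correctly once the variances vanish), so $E_{\alpha,\beta}$ is $T$-invariant $\mu$-a.e. Moreover $\overline{\varphi}\le\sup_{n}\frac{1}{n}\sum_{j=0}^{n-1}T^{j}\varphi$ gives $E_{\alpha,\beta}\subset C_{\alpha}$, and the dual inequality for $\underline{\varphi}$ gives $E_{\alpha,\beta}\subset B_{\beta}$. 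Corollaries \ref{corollaryMaximalErgTheo} and \ref{corollaryMaximalErgTheo2} then yield $\alpha\,\mu(E_{\alpha,\beta})\le\int_{E_{\alpha,\beta}}\varphi\,d\mu\le\beta\,\mu(E_{\alpha,\beta})$, which with $\alpha>\beta$ forces $\mu(E_{\alpha,\beta})=0$; the countable union over rational pairs gives $\overline{\varphi}=\underline{\varphi}$ $\mu$-a.e., so $\widetilde\varphi_{p}$ exists. When $\mu$ is ergodic for ${\mathcal{L}^{*}}^{p}$, the level sets of the limit $\widetilde\varphi_{p}$ are $T$-invariant $\mu$-a.e., hence of measure $0$ or $1$ by Definition \ref{definitionErgodic}, so $\widetilde\varphi_{p}$ is constant $\mu$-a.e.; bounded convergence together with $\int T^{j}\varphi\,d\mu=\int\varphi\,d\mu$ identifies the constant as $\int\varphi\,d\mu$.

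The main obstacle I expect is precisely the invariance step: unlike the deterministic case, the expression ``$\overline{\varphi}\circ T=\overline{\varphi}$'' is meaningless, and its honest substitute—fixedness of the function together with invariance of its level sets—needs both the monotone-continuity/reverse-Fatou property and the Jensen-plus-zero-variance rigidity of the Markov operator $T$. Checking that the $L_{\infty}$-extension furnished by Proposition \ref{propositionCalL:L_inftyToL_infty} genuinely admits the integral representation $Tg(x)=\int g\,dP_{T}(x,\cdot)$ required by both arguments is the delicate point on which the whole reduction to the corollaries rests.
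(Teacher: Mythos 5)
Your proposal is correct, and its overall skeleton coincides with the paper's: reduce to the transfer operator $T={\mathcal L}^p$, whose dual is ${{\mathcal L}^*}^p$ (exactly the paper's final step); show that the gap sets $\{\limsup_n\frac1n\sum_{j=0}^{n-1}T^j\varphi>\alpha\}\cap\{\liminf_n\frac1n\sum_{j=0}^{n-1}T^j\varphi<\beta\}$ are $\mu$-a.e.\ $T$-invariant; squeeze their measure between $\alpha$ and $\beta$ via Corollaries \ref{corollaryMaximalErgTheo} and \ref{corollaryMaximalErgTheo2}; take a countable union over rational pairs $\alpha>\beta$; and, in the ergodic case, deduce that the limit is constant and identify the constant by the duality $\int{\mathcal L}^j\varphi\,d\mu=\int\varphi\,d\mu$. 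This is precisely the route of Theorem \ref{TheoremMainErgodic}, Lemma \ref{lemmaMainErgodicTheorem}, Proposition \ref{propositionErgodMeasuresInvariantFunctions} and Corollary \ref{CorollaryMainErgodicTheorem}. Where you genuinely diverge is in the key invariance lemma. The paper passes from a $\mu$-a.e.\ ${\mathcal L}$-invariant function to its level sets through positive parts: Lemma \ref{lemma22} gives ${\mathcal L}(\varphi^+)\geq({\mathcal L}\varphi)^+$, integration against the fixed measure upgrades this to equality $\mu$-a.e., and an equality-case analysis in $\int\varphi^+\,dP(x,\cdot)\geq\int\varphi\,dP(x,\cdot)$ yields $P(x,A_0)=\chi_{A_0}(x)$; it then needs Lemma \ref{lemmaInvariantFunctions2} for strict/lower level sets and a separate argument (Lemma \ref{lemmaMainErgodicTheorem}) for intersections. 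You instead use Jensen rigidity: for $g$ fixed by $T$, $T(g^2)\geq(Tg)^2=g^2$ with equal $\mu$-integrals forces $T(g^2)=g^2$ $\mu$-a.e., i.e.\ zero conditional variance, so $g$ is $P_T(x,\cdot)$-a.s.\ equal to $g(x)$ for $\mu$-a.e.\ $x$. That single rigidity statement delivers invariance of every level set and of finite intersections in one stroke, consolidating three of the paper's lemmas; it is arguably cleaner, at the modest price of invoking Jensen and the $L_\infty$-duality (\ref{eqn00}) for $g^2$. You also make explicit (telescoping plus reverse Fatou, then integration against the fixed $\mu$) the step the paper dismisses as ``standard to check'', namely that the $\limsup$ and $\liminf$ functions are fixed by $T$. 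One point you share with the paper rather than resolve: both arguments tacitly identify the $L_\infty$-extension of ${\mathcal L}^p$ built from its own transition probabilities $P_T(x,\cdot)$ with the $p$-th iterate of the $L_\infty$-extension of ${\mathcal L}$, which is what lets the corollaries for $T$ speak about the averages $\frac1n\sum_{j}{\mathcal L}^{jp}\varphi$ appearing in the statement; your closing remark correctly flags this integral-representation issue as the delicate point, and it is no worse in your argument than in the paper's.
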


  The proofs of Theorems \ref{TheoremMaximalErgodic} and \ref{MainTheorem1}, as well as the proofs of   their corollaries and other ergodic theorems, will be developed along Sections \ref{sectionErgodicTheorems} and \ref{sectionErgodicMeasures}. In Section \ref{sectionPreviousResults}, we  prove some    previous statements.

\section{Previous Results} \label{sectionPreviousResults}

\begin{Proposition} \label{proposition-P(x,cdot)}
Let ${\mathcal L}$ be a transfer operator. Then, there exists a unique family of probability measures $\{P (x, \cdot)\}_{x \in X} \subset {\mathcal M}$ such that:
$$({\mathcal L} \varphi) (x) = \int \varphi(y) \, P (x, dy) \ \ \ \forall \ x \in X \ \ \ \forall \ \varphi \in C^0(X, \mathbb{C}).$$
Besides, the probability measure  $P({x, \cdot}) \in {\mathcal M}$ depends continuously on $x \in X$ in the weak$^*$ topology of ${\mathcal M}$.
\end{Proposition}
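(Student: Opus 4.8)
The plan is to apply the Riesz Representation Theorem pointwise in $x$, and then to read off uniqueness and weak$^*$-continuity directly from the defining properties of ${\mathcal L}$. First I would fix $x \in X$ and consider the linear functional $\Lambda_x \colon C^0(X,\mathbb{C}) \to \mathbb{C}$ given by $\Lambda_x(\varphi) := ({\mathcal L}\varphi)(x)$. Its linearity is inherited from that of ${\mathcal L}$. Restricting to real arguments, the positivity condition \ref{DefinitionTransferOperatorL}.1 guarantees that ${\mathcal L}\varphi$ is real whenever $\varphi$ is real: writing $\varphi = \varphi^+ - \varphi^-$ as a difference of non-negative continuous functions, one has ${\mathcal L}\varphi = {\mathcal L}\varphi^+ - {\mathcal L}\varphi^-$, which is real. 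Hence $\Lambda_x$ maps $C^0(X,\mathbb{R})$ into $\mathbb{R}$ and is a \emph{positive} functional, while condition \ref{DefinitionTransferOperatorL}.2 (namely $\|{\mathcal L}\|=1$ and ${\mathcal L}\cdot 1 = 1$) yields $\Lambda_x(1) = ({\mathcal L}\cdot 1)(x) = 1$ and $|\Lambda_x(\varphi)| \le \|\varphi\|_\infty$.

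Next I would invoke the Riesz Representation Theorem. Since $X$ is a compact metric space and $\Lambda_x$ is a positive linear functional on $C^0(X,\mathbb{R})$, there exists a unique regular Borel measure $P(x,\cdot)$ on $(X,{\mathcal A})$ representing it, that is, $({\mathcal L}\varphi)(x) = \int \varphi(y)\, P(x, dy)$ for every real $\varphi$. Extending by complex linearity, $\varphi = \operatorname{Re}\varphi + i\operatorname{Im}\varphi$, the same identity holds for all $\varphi \in C^0(X,\mathbb{C})$. The normalization $\Lambda_x(1) = 1$ forces $P(x,X) = 1$, so in fact $P(x,\cdot) \in {\mathcal M}$ is a probability measure. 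Uniqueness of the whole family $\{P(x,\cdot)\}_{x\in X}$ is then immediate: each functional $\Lambda_x$ is completely determined by ${\mathcal L}$, and the measure representing a positive functional is unique by the Riesz Theorem.

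For the weak$^*$-continuity of $x \mapsto P(x,\cdot)$, I would simply unwind the definition of the weak$^*$-topology on ${\mathcal M}$: it suffices to check that $x \mapsto \int \varphi\, dP(x,\cdot)$ is continuous for each fixed $\varphi \in C^0(X,\mathbb{C})$. But by construction $\int \varphi\, dP(x,\cdot) = ({\mathcal L}\varphi)(x)$, and ${\mathcal L}\varphi \in C^0(X,\mathbb{C})$ precisely because ${\mathcal L}$ maps continuous functions to continuous functions. Therefore $x \mapsto \int \varphi\, dP(x,\cdot)$ is continuous, which is exactly the weak$^*$-continuity of $x \mapsto P(x,\cdot)$.

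I do not anticipate a genuine obstacle here, since the statement is essentially a clean pointwise application of Riesz. The one point deserving care is the reduction from the complex functional to a real positive one, so that Riesz applies and delivers a bona fide positive measure of total mass $1$ rather than merely a signed or complex measure; once positivity and the normalization ${\mathcal L}\cdot 1 = 1$ are used, the probabilistic nature of $P(x,\cdot)$, together with both the uniqueness and the continuity claims, follows directly from the stated properties of ${\mathcal L}$.
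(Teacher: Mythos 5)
Your proposal is correct and follows essentially the same route as the paper: pointwise application of the Riesz Representation Theorem to the positive, normalized functional $\Lambda_x(\varphi) = ({\mathcal L}\varphi)(x)$, with weak$^*$-continuity of $x \mapsto P(x,\cdot)$ read off from the fact that ${\mathcal L}\varphi$ is continuous. Your treatment of the complex-to-real reduction and the positivity of $\Lambda_x$ is actually slightly more careful than the paper's, which glosses over that step, but the argument is the same.
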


\begin{proof}
Fix $x \in X$. The transformation $\Lambda_x :C^0(X, \mathbb{R}) \mapsto   \mathbb{R}$, defined by $ \Lambda_x(\varphi := ({\mathcal L} \varphi) (x)  $ is a  linear operator defined on the space of real continuous functions. It is positive,   bounded by 1, and $\Lambda_x(1)= 1$. So, applying Riesz Theorem there exists a unique probability measure $P(x, \cdot) \in {\mathcal M}$  such that $\Lambda_x \varphi = \int \varphi(y) \, P(x, dy)$ for all $\varphi \in C^0(X, \mathbb{C})$. To end the proof of Proposition \ref{proposition-P(x,cdot)} we must prove that $P(x, \cdot)$  depends continuously on $x$ in the weak$^*$ topology of ${\mathcal M}$. Equivalently, we must prove that if $x_n \rightarrow x \in X$ as ${n \rightarrow + \infty}$ then, $\lim_{n \rightarrow + \infty} \int \varphi(y) P(x_n, dy) = \int \varphi (y) \, P(x, dy)$ for any continuous function $\varphi \in C^0(X, \mathbb{C})$. In fact, by construction of the probability measure $P(x_n, \cdot)$, and recalling that ${\mathcal L} \varphi$ is  by definition a continuous function if $\varphi$ is continuous, we have:
$$\lim_{n \rightarrow + \infty} \int \varphi (y) P(x_n, dy) = \lim _{n \rightarrow + \infty} ({\mathcal L}\varphi)(x_n) = ({\mathcal L} \varphi)(x)  =   \int \varphi(y) P(x, dy), $$
ending the proof.
\end{proof}

\begin{Remark} \label{RemarkInvariantSetsWithP(x,A)}
\em As a consequence of Proposition \ref{proposition-P(x,cdot)}, for any   measure $\mu \in {\mathcal M}$, we have:
\begin{equation}
\label{eqnInvariantSetsLchi=P}
A \in {\mathcal A} \mbox{ is } \mu\mbox{-a.e. }  {\mathcal L}\mbox{-invariant } \ \ \ \Leftrightarrow \ \ \ \chi_A(x) = ({\mathcal L} \varphi) (x) = P(x, A) \ \ \mbox{ for } \mu\mbox{-a.e. } x \in X.\end{equation}
\end{Remark}

\begin{Proposition}
\label{PropositionL*continuous}

The transfer operator ${\mathcal L}^*: {\mathcal M} \mapsto {\mathcal M}$ is continuous in the weak$^*$ topology of ${\mathcal M}$.
\end{Proposition}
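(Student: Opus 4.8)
The plan is to unravel the definition of weak$^*$ continuity and reduce it to the fact, already built into the definition of the transfer operator, that ${\mathcal L}$ maps continuous functions to continuous functions. Since $X$ is a compact metric space, $C^0(X, \mathbb{R})$ is separable and the weak$^*$ topology on ${\mathcal M}$ is metrizable; hence it suffices to verify sequential continuity. So I would fix a sequence $\mu_n \to \mu$ in the weak$^*$ topology and aim to show ${\mathcal L}^* \mu_n \to {\mathcal L}^* \mu$ in that same topology.

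By the definition of weak$^*$ convergence, $\mu_n \to \mu$ means $\int \psi \, d\mu_n \to \int \psi \, d\mu$ for every $\psi \in C^0(X, \mathbb{C})$. To establish ${\mathcal L}^* \mu_n \to {\mathcal L}^* \mu$, I must show that $\int \varphi \, d{\mathcal L}^* \mu_n \to \int \varphi \, d{\mathcal L}^* \mu$ for every $\varphi \in C^0(X, \mathbb{C})$. The key step is to invoke the defining identity (\ref{eqn000}) of ${\mathcal L}^*$, which rewrites each side as an integral against ${\mathcal L} \varphi$:
$$\int \varphi \, d{\mathcal L}^* \mu_n = \int {\mathcal L} \varphi \, d\mu_n, \qquad \int \varphi \, d{\mathcal L}^* \mu = \int {\mathcal L} \varphi \, d\mu.$$

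Now comes the only point that requires anything beyond formal manipulation: by Definition \ref{DefinitionTransferOperatorL}, ${\mathcal L} \varphi$ is itself a continuous function whenever $\varphi$ is. Taking $\psi = {\mathcal L} \varphi$ in the weak$^*$ convergence $\mu_n \to \mu$ gives $\int {\mathcal L} \varphi \, d\mu_n \to \int {\mathcal L} \varphi \, d\mu$, and combining this with the two identities above yields $\int \varphi \, d{\mathcal L}^* \mu_n \to \int \varphi \, d{\mathcal L}^* \mu$. Since $\varphi$ was arbitrary, this is precisely the statement that ${\mathcal L}^* \mu_n \to {\mathcal L}^* \mu$ in the weak$^*$ topology, which completes the argument.

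I do not expect a genuine obstacle here: the proof is essentially a dualization, and its whole content is that ${\mathcal L}$ preserves $C^0(X, \mathbb{C})$, so that the test function ${\mathcal L} \varphi$ against which we integrate is admissible for weak$^*$ convergence. The one thing to be careful about is the logical direction: continuity of ${\mathcal L}^*$ follows from ${\mathcal L}$ mapping $C^0$ into $C^0$, not merely into the space of bounded measurable functions. I would therefore make sure to cite Definition \ref{DefinitionTransferOperatorL} at this point, rather than the later $L_\infty$ extension, so that the test function ${\mathcal L} \varphi$ is genuinely continuous.
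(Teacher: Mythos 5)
Your proof is correct and follows essentially the same route as the paper's: both reduce weak$^*$ continuity to the defining identity $\int \varphi \, d{\mathcal L}^*\mu = \int {\mathcal L}\varphi \, d\mu$ together with the fact that ${\mathcal L}\varphi$ is continuous, so it is an admissible test function. The only difference is that you explicitly justify working with sequences via metrizability of the weak$^*$ topology on ${\mathcal M}$ (a point the paper uses tacitly), which makes your write-up slightly more complete.
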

\begin{proof}
If $\mu_n \rightarrow \mu$ in the weak$^*$ topology, then $ \int {\mathcal L} \varphi \, d \mu_n \rightarrow \int {\mathcal L} \varphi \, d \mu$ for any continuous function $\varphi$ (because ${\mathcal L} \varphi$  is also a continuous function). Thus, applying the definition of  the  measure ${\mathcal L}^* \mu$, we re-write the latter equality as $\int  \varphi \, d {\mathcal L}^* \mu_n \rightarrow \int \varphi \, {\mathcal L}^* d \mu$ for any continuous function $\varphi$. In other words ${\mathcal L}^* \mu_n \rightarrow {\mathcal L}^* d \mu$  in the weak$^*$ topology of ${\mathcal M}$ (provided that $\mu_n \rightarrow \mu$). We conclude that    the transfer operator $ {\mathcal L}^*$ is continuous, as wanted.
\end{proof}

\begin{Definition}
\label{definitionL_infty} \em

We denote by $L_{\infty}$ the set of bounded  functions $\varphi: X \mapsto \mathbb{C}$ such that for any probability measure $\mu \in {\mathcal M}$ there exists a measurable function $  \varphi_{\mu}$ that coincides $\mu$-a.e. with $\varphi$.

Thus, for  any $\varphi \in L_{\infty}$ it is well defined the integral of $\varphi$ with respect to any measure $\mu \in {\mathcal M}$, by the following equality
 $\int\varphi \, d \mu := \int  \varphi_{\mu} \, d \mu.$
In particular, it is well defined the following extension of the transfer operator ${\mathcal L}$ to any real function $\varphi \in L_{\infty}$:
  \begin{equation}
  \label{eqn001}
  ({\mathcal L}  \varphi) (x) : = \int \varphi(y) \, P(x, dy), \ \ \forall \ x \in X, \ \ \forall \ \varphi \in L_{\infty},\end{equation}
  where  $P(x, dy)$ is the probability measure constructed by Proposition \ref{proposition-P(x,cdot)} for each $x \in X$.
\end{Definition}

\begin{Proposition}
\label{propositionCalL:L_inftyToL_infty}

For any real function $\varphi \in L_{\infty}$ the function ${\mathcal L} \varphi$ constructed by equality \em (\ref{eqn001}) \em also belongs to $L_{\infty}$. Besides
\begin{equation}
  \label{eqn00}
  \int \varphi \, d{\mathcal L}^* \mu = \int  {\mathcal L} \varphi  \, d \mu \ \ \ \ \forall \ \varphi \in L_{\infty} \ \ \ \ \ \forall \ \mu \in {\mathcal M}.\end{equation}
\end{Proposition}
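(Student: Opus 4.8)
The plan is to prove both assertions in two layers: first for bounded real \emph{Borel} functions, where $\mathcal{L}\varphi$ is genuinely measurable and the only task is to push the continuous-function identity through limits, and then to bootstrap from Borel functions to the whole of $L_\infty$ by tracking how null sets are transported by the family $\{P(x,\cdot)\}$. The base case is already in hand: for $\varphi \in C^0(X,\mathbb{R})$ the function $\mathcal{L}\varphi$ is continuous, hence Borel, by Definition \ref{DefinitionTransferOperatorL}, and the identity $\int \varphi\,d\mathcal{L}^*\mu = \int \mathcal{L}\varphi\,d\mu$ holds by the very definition of $\mathcal{L}^*$ in (\ref{eqn000}). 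Boundedness of $\mathcal{L}\varphi$ by $\sup|\varphi|$ is immediate from (\ref{eqn001}) since each $P(x,\cdot)$ is a probability measure, so that point is routine throughout.

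For the Borel layer I would introduce the class $\mathcal{H}$ of bounded real Borel functions $\varphi$ for which $\mathcal{L}\varphi$, defined fiberwise by $(\mathcal{L}\varphi)(x)=\int \varphi(y)\,P(x,dy)$ as in (\ref{eqn001}), is Borel and satisfies (\ref{eqn00}) for every $\mu\in\mathcal{M}$. This $\mathcal{H}$ contains $C^0(X,\mathbb{R})$, is a vector space containing the constants, and is closed under uniformly bounded monotone limits: if $0\le\varphi_k\uparrow\varphi$, then applying the monotone convergence theorem to each probability measure $P(x,\cdot)$ gives $(\mathcal{L}\varphi_k)(x)\uparrow(\mathcal{L}\varphi)(x)$ pointwise, so $\mathcal{L}\varphi$ is Borel as a pointwise limit of Borel functions; applying monotone convergence once on the left of (\ref{eqn00}) against $\mathcal{L}^*\mu$ and once on the right against $\mu$, and using the identity for each $\varphi_k$, yields (\ref{eqn00}) for $\varphi$. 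Since $C^0(X,\mathbb{R})$ is closed under products and generates the Borel $\sigma$-algebra $\mathcal{A}$, the functional monotone class theorem then forces $\mathcal{H}$ to contain every bounded real Borel function.

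To pass to $L_\infty$, fix $\mu\in\mathcal{M}$ and a real $\varphi\in L_\infty$, and set $\nu:=\mathcal{L}^*\mu$. Applying the Borel identity (\ref{eqn00}) to $\varphi=\chi_A$ shows $\nu(A)=\int P(x,A)\,d\mu(x)$ for every $A\in\mathcal{A}$, so $\nu$ is the $\mu$-average of the measures $P(x,\cdot)$. By Definition \ref{definitionL_infty}, pick a Borel $\psi$ and a Borel set $B$ with $\nu(B)=0$ and $\varphi=\psi$ off $B$. Then $\int P(x,B)\,d\mu(x)=\nu(B)=0$ forces $P(x,B)=0$ for $\mu$-a.e.\ $x$, and for each such $x$ the functions $\varphi$ and $\psi$ agree $P(x,\cdot)$-a.e., whence $(\mathcal{L}\varphi)(x)=(\mathcal{L}\psi)(x)$. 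Thus $\mathcal{L}\varphi=\mathcal{L}\psi$ $\mu$-a.e.; since $\mathcal{L}\psi$ is Borel by the previous step and $\mu$ was arbitrary, this already gives $\mathcal{L}\varphi\in L_\infty$. The duality follows from the chain $\int\varphi\,d\nu=\int\psi\,d\nu=\int\mathcal{L}\psi\,d\mu=\int\mathcal{L}\varphi\,d\mu$, whose middle equality is the Borel case and whose outer equalities use $\varphi=\psi$ $\nu$-a.e.\ and $\mathcal{L}\varphi=\mathcal{L}\psi$ $\mu$-a.e.\ respectively. The complex case of (\ref{eqn00}) then follows by treating real and imaginary parts separately.

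The main obstacle is exactly this last layer: an $L_\infty$ function need not be Borel, and modifying it on a $\nu$-null set could, a priori, change $\mathcal{L}\varphi$ on a set of positive $\mu$-measure. What rescues the argument is the disintegration $\nu=\int P(x,\cdot)\,d\mu$, which guarantees that a $\nu$-null set is $P(x,\cdot)$-null for $\mu$-almost every $x$; it is worth stressing that this control is obtained with no hypothesis of $\mathcal{L}^*$-invariance on $\mu$, so the transport of null sets must be established directly from the averaging formula for $\nu$ rather than assumed.
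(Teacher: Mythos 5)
Your proof is correct, but it follows a genuinely different route from the paper's. The paper proceeds by an explicit approximation ladder: continuous functions first; then characteristic functions of open sets (exhaustion by compacts plus Urysohn functions and dominated convergence); then compact sets by complementation; then arbitrary Borel sets by inner/outer regularity of ${\mathcal L}^*\mu$, squeezing ${\mathcal L}\chi_{K_n}\le{\mathcal L}\chi_A\le{\mathcal L}\chi_{V_n}$ and showing the two limits agree $\mu$-a.e.; then simple functions by linearity; then bounded real Borel functions by monotone approximation; finally the complex case. Your functional monotone class argument compresses that entire ladder into one step and yields a strictly stronger intermediate conclusion: for bounded Borel $\varphi$ your ${\mathcal L}\varphi$ is genuinely Borel, whereas the paper's regularity argument only produces, for each $\mu$, a measurable function agreeing with ${\mathcal L}\chi_A$ up to a $\mu$-null set. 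More significantly, your third layer --- using ${\mathcal L}^*\mu=\int P(x,\cdot)\,d\mu$ to transport $\nu$-null sets and conclude ${\mathcal L}\varphi={\mathcal L}\psi$ $\mu$-a.e. --- has no counterpart in the paper: the paper's proof stops at bounded Borel (then complex Borel) functions, while Definition \ref{definitionL_infty} admits elements of $L_\infty$ that are not Borel, so your bootstrap is what makes the statement hold for all of $L_\infty$ as literally defined. What the paper's approach buys is self-containedness (only Riesz, Urysohn, regularity, and dominated convergence); what yours buys is brevity, the stronger Borel-measurability conclusion, and a rigorous treatment of the non-Borel members of $L_\infty$ that the paper leaves implicit.
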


  \begin{proof}
Since $|\varphi (y)| \leq k $ for all $y \in X$, we have
 $\displaystyle ({\mathcal L} \varphi) (x)  = \int \varphi(y) \, P  (x , dy) \leq k \ \ \ \forall \ x \in X. $
Therefore ${\mathcal L} \varphi$  is bounded.
Let us prove that  ${\mathcal L}  \varphi$ coincides with a measurable function for $\mu$-a.e. $x \in X$.

\noindent{\bf 1st. step. } If $\varphi: X   \mapsto \mathbb{C}$ is continuous, then from hypothesis
 $({\mathcal L} \varphi)  $ is continuous, hence measurable.

\noindent{\bf 2nd. step. } Let us prove that for any open set $V \subset X$, the real function ${\mathcal L} \chi_V $ is measurable.
Since $X$ is a compact metric space, for any open set $V \subset X$ there exists a increasing sequence $\{K_n\}_{n \geq 1}$ of compact sets $K_n \subset X$, such that $\bigcup_{n \geq 1} K_n = V$. So $\lim _{n \rightarrow + \infty} \chi_{K_n}(x) = \chi_V(x)$ for all $x \in X$. From Urysohn Lemma, there exists a sequence of continuous functions $\varphi_n: X \mapsto [0,1]$ such that $\chi_{K_n}(x) \leq \varphi_n(x) \leq \chi_V(x) $ for all $x \in X$. Therefore, \begin{equation}
\label{eqn119a}
\lim_{n\rightarrow + \infty} \varphi_n(x) = \chi_V(x) \ \ \ \forall \ x \in X.\end{equation}
Applying the dominated convergence theorem, we deduce that:
$$ \lim_{n \rightarrow + \infty} \int \varphi_n(y) P (x, dy) = \int \chi_V(y) \, P (x,dy) \ \ \forall \ x \in   X.$$
By the definition of the operator ${\mathcal L}$ we deduce that
\begin{equation}
\label{eqn119}
\lim_{n \rightarrow + \infty} ({\mathcal L}  \varphi_n) (x)  =  ({\mathcal L}   \chi_V) (x) \ \ \forall \ x \in X.\end{equation}
Since $\varphi_n$ is continuous, $({\mathcal L}  \varphi_n)$ is continuous, hence measurable. Besides, the point-wise limit of measurable functions is measurable. We deduce that ${\mathcal L}  \chi_V$ is measurable, as wanted.

\noindent{\bf 3rd. step. }
Let us prove that for any open set $V \subset X$, the following equality holds:
\begin{equation}
\label{eqn118} ({\mathcal L} ^* \mu)(V) = \int ({\mathcal L}  \chi_V) \, d \mu.
\end{equation}
In fact, applying equalities (\ref{eqn000}), (\ref{eqn119a}) and (\ref{eqn119}), and the dominated convergence theorem,  we obtain:
$$\int ({\mathcal L}  \chi_V) \, d   \mu   = \lim_{n \rightarrow + \infty} \int ({\mathcal L} \varphi_n)\, d \mu = \lim_{n \rightarrow + \infty} \int \varphi_n \, d  {\mathcal L} ^* \mu  = \int \chi_V \, d  {\mathcal L}^* \mu  =   ({\mathcal L}^* \mu)(V). $$
So, equality (\ref{eqn118}) is proved.

\noindent{\bf 4th. step. } For any compact set $K \subset X$, equality (\ref{eqn118}) holds, with $K$ in the role of $V$. In fact, $\chi_K = 1 - \chi_V$, where $V = X \setminus K $ is open, hence satisfies equality (\ref{eqn118}). Besides $ {\mathcal L} (1 -\chi_V) = 1 -  {\mathcal L} \chi_V  $ and $({\mathcal L}^* \mu)(K) =  1- ({\mathcal L}^* \mu)(V)$. So equality (\ref{eqn118}) also holds   for $K$ instead of $V$.

\noindent{\bf 5th. step. } Let us prove that for any measurable set $A \subset X$ and any probability measure $\mu$, the function  $ {\mathcal L} \chi_A$ is measurable $\mu$-a.e.  (namely, ${\mathcal L} \chi_A$ coincides with a measurable function up to a set of zero $\mu$-measure).

 Since $(X, {\mathcal A})$ is the measurable space of a compact metric space $X$ with the Borel sigma-algebra ${\mathcal A}$, any probability measure  in $(X, {\mathcal A})$ is regular. So, for any set $A \in {\mathcal A}$ and   any natural number $n \geq 1$, there exists a compact set $K_n \subset A$ and an open set $V_n \supset A$, such that
 $$({\mathcal L}^* \mu)(V_n \setminus K_n) < \frac{1}{n} \ \ \ \forall \ n \geq 1.$$
 It is not restrictive to assume that $K_n \subset K_{n+1} $ and $V_{n+1} \subset V_n$ for all $n \geq 1$.  If not, we substitute $K_n$ by $\bigcup_{j= 1}^n K_j$, and $V_n$ by $\bigcap _{j= 1}^n V_j$.

 Since $K_n \subset A \subset V_n$ we have
 $$\chi_{K_n} \leq \chi_A \leq \chi_{V_n} \ \ \forall \ n \geq 1. $$
 Therefore
 \begin{equation}\label{eqn8a}  \lim _{n \rightarrow + \infty} \chi_{K_n} (x) = \chi_{(\bigcup_{n \geq 1} K_n)} (x) \leq \chi_A (x) \leq \chi_{(\bigcap_{n \geq 1} V_n)}(x) = \lim_{n \rightarrow + \infty} \chi_{V_n} (x) \ \ \ \ \ \forall \ x \in X. \end{equation}
 Thus,  applying the dominated convergence theorem, we deduce that:
 $$\lim_{n \rightarrow + \infty} \int \chi_{K_n} (y) \, P(x, dy) \leq \int \chi_A (y) \, P(x, dy) \leq  \lim_{n \rightarrow + \infty} \int \chi_{V_n} (y)  \, P(x, dy) \ \ \ \forall \ x \in X.$$
 Equivalently:
\begin{equation}
 \label{eqn120} \lim_{n \rightarrow + \infty} ({\mathcal L} \chi_{K_n})(x) \leq  ({\mathcal L} \chi_{A})(x) \leq \lim_{n \rightarrow + \infty} ({\mathcal L} \chi_{V_n})(x)  \ \  \ \ \forall \ x \in X.\end{equation}
 Since $V_n \setminus K_n$ is an open set,   $({\mathcal L} \chi_{V_n \setminus K_n})$ is a measurable function, and   equality (\ref{eqn118}) applies:
 $$0 \leq \int  {\mathcal L} \chi_{V_n \setminus K_n}  \, d \mu  = \int \chi_{V_n \setminus K_n} \, d ({\mathcal L}^* \mu) =  ({\mathcal L}^* \mu) (V_n \setminus K_n) < \frac{1}{n}.$$
 Thus, applying again the dominated convergence theorem, we obtain:
 $$ 0 \leq \int (\lim_{n \rightarrow + \infty } {\mathcal L} \chi_{V_n} - \lim_{n \rightarrow + \infty } {\mathcal L} \chi_{K_n}) \, d \mu  = \lim _{n \rightarrow + \infty} \int  {\mathcal L} \chi_{V_n \setminus K_n}  \, d \mu = 0.$$
 But the integrated function is non negative. Thus it must be null $\mu$-a.e. We have proved that    $$\lim_{n \rightarrow + \infty } {\mathcal L} \chi_{V_n}(x)   =   \lim_{n \rightarrow + \infty } {\mathcal L} \chi_{K_n} (x) \ \ \ \mbox{ for } \mu\mbox{-a.e. } x \in X. $$
 Joining this result with inequalities (\ref{eqn120}) we conclude that $\chi_A (x)$ coincides, for $\mu$- a.e. $x \in X$ with a measurable function. Precisely
 $$({\mathcal L}  \chi_A) (x) = \lim_{n \rightarrow + \infty} ({\mathcal L} \chi_{V_n})(x) = \lim_{n \rightarrow + \infty} ({\mathcal L} \chi_{K_n})(x) \ \ \ \mbox{ for } \mu\mbox{-a.e. } x \in X.$$
 Therefore, taking into account inequalities (\ref{eqn8a}) and that $\chi_{K_n}$ and $\chi_{V_n}$ satisfy equality (\ref{eqn00}), we obtain
 $$\int {\mathcal L} \chi_A \, d \mu = \int \lim_{n \rightarrow + \infty}{\mathcal L} \chi_{V_n} \, d \mu = $$ $$
 \lim_{n \rightarrow + \infty} \int  {\mathcal L} \chi_{V_n} \, d \mu = \lim_{n \rightarrow + \infty} \int \chi_{V_n} \, d {\mathcal L}^* \mu = \int \lim_{n \rightarrow + \infty}   \chi_{V_n} \, d {\mathcal L}^* \mu \geq \int \chi_A \, d {\mathcal L}^* \mu,$$
$$\int {\mathcal L} \chi_A \, d \mu = \int \lim_{n \rightarrow + \infty}{\mathcal L} \chi_{K_n} \, d \mu = $$ $$
 \lim_{n \rightarrow + \infty} \int  {\mathcal L} \chi_{K_n} \, d \mu = \lim_{n \rightarrow + \infty} \int \chi_{K_n} \, d {\mathcal L}^* \mu = \int \lim_{n \rightarrow + \infty}   \chi_{K_n} \, d {\mathcal L}^* \mu \leq \int \chi_A \, d {\mathcal L}^* \mu.$$
 We conclude that $\chi_A$ also satisfies equality (\ref{eqn00}).

\noindent{\bf 6th. step. } Consider a simple measurable function $\varphi$; i.e. $\varphi  $ is a finite linear combination, with real coefficients, of characteristic functions of measurable sets.  Then, the function ${\mathcal L} \varphi$ is a finite linear combination  of $\mu$-a.e. measurable functions, because the operator ${\mathcal L}$ is linear. Since the finite linear combination of measurable functions is measurable, we conclude that ${\mathcal L} \varphi$ coincides   $\mu$-a.e. with a measurable function. Besides, taking into account that the characteristic functions of measurable sets satisfy equality (\ref{eqn00}), by the linearity of the integrals, we deduce that the simple function $\varphi$ also satisfies equality (\ref{eqn00}).

\noindent{\bf 7th. step. } Now, consider any bounded measurable function $\varphi: X \mapsto \mathbb{R}$. It is well known that there exists an increasing (in absolute value)   sequence $\{\varphi_n\}_{n \geq 1}$ of simple measurable functions such that  $\lim _{n \rightarrow + \infty} \varphi_n(x) = \varphi(x)$ for all $x \in X$. Thus,   for all $x \in X$ we can apply the dominated convergence theorem of the integrals of the functions $\varphi_n$ with respect to the   probabilities $P(x, \cdot)$. We deduce:
$$({\mathcal L}\varphi )(x) = \lim_{n \rightarrow + \infty} ({\mathcal L}\varphi_n)(x)  \ \ \ \ \forall \ x \in X \setminus \Delta.  $$
We have already proved that the functions $ {\mathcal L}\varphi_n $ coincide with a measurable function for $\mu$-a.e. point in $X \setminus \Delta$, because $\varphi_n$ is a simple measurable function. So, their point-wise limit also coincides with a measurable function   $\mu$-a.e. Besides, since the simple functions satisfy equality (\ref{eqn00}), by the dominated convergence theorem, the bounded real function $\varphi$ also satisfies it.

\noindent{\bf 8th. step. } Finally, consider any bounded measurable function $\varphi: X \mapsto \mathbb{C}$.  By taking real and imaginary parts of $\varphi$, and taking into account that ${\mathcal L}$ and the integrals are linear, we conclude that ${\mathcal L} \varphi$ coincides $\mu$-a.e. with a measurable function, and satisfies equality (\ref{eqn00}). This ends the proof of Proposition \ref{propositionCalL:L_inftyToL_infty}.
\end{proof}

\section{Proof of the Maximal Ergodic Theorem and its Corollaries} \label{sectionErgodicTheorems}

We will start proving the Maximal Ergodic Theorem
\ref{TheoremMaximalErgodic}. To prove it we need some previous Lemmas:

\begin{Lemma}
\label{lemma22}
For any bounded measurable function $\varphi: X \mapsto \mathbb{R}$, consider the positive and negative parts of $\varphi$  defined by:
$$\varphi^+(x) := \max\{0, \varphi(x)\} \geq 0, \ \ \ \ \varphi^-(x) := -\min\{0, \varphi(x)\} \geq 0, \ \ \ \ \ \varphi= \varphi^+ - \varphi^- \leq \varphi^+.$$
Then $$({\mathcal L} (\varphi ^+)) (x)  \geq ({\mathcal L} \varphi)^+ (x)  \ \ \ \ \forall \ \ x \in X. $$
\end{Lemma}
\begin{proof} On the one hand, applying Proposition \ref{proposition-P(x,cdot)}  we have:
$$({\mathcal L} (\varphi ^+)) (x) = \int \varphi^+(y) \, P(x, dy) \geq \int \varphi (y) \, P(x, dy) = ({\mathcal L} \varphi) (x) \ \ \ \forall \ x \in X.$$
On the other hand, since $\varphi^+ \geq 0$, we have ${\mathcal L} (\varphi ^+) \geq 0 $. So,
 ${\mathcal L} (\varphi ^+) \geq \max \{0, ({\mathcal L} \varphi)\}  = ({\mathcal L} \varphi)^+$,
as wanted.
\end{proof}
\begin{Lemma}
\label{lemmaTeoMaximalErgodic}

Let $\mu \in {\mathcal M}$ such that ${\mathcal L}^* \mu = \mu$. Then, for any bounded measurable function $\varphi : X \mapsto {\mathbb{R}}$:
$$ \int_{\varphi >0} \varphi \, d \mu \geq \int_{({\mathcal L} \varphi) > 0} ({\mathcal L} \varphi) \, d \mu.$$
\end{Lemma}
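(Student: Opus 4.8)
The plan is to rewrite both integrals in terms of the positive-part operation and then exploit the combination of the pointwise inequality furnished by Lemma \ref{lemma22} with the invariance hypothesis ${\mathcal L}^* \mu = \mu$. First I observe that, by the very definition $\varphi^+ = \max\{0, \varphi\}$, one has $\int_{\varphi > 0} \varphi \, d\mu = \int \varphi^+ \, d\mu$, and likewise $\int_{({\mathcal L}\varphi) > 0} ({\mathcal L}\varphi)\, d\mu = \int ({\mathcal L}\varphi)^+ \, d\mu$. Thus the asserted inequality is equivalent to the cleaner form $\int ({\mathcal L}\varphi)^+ \, d\mu \le \int \varphi^+\, d\mu$, which is what I would aim to establish.

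Next I would apply Lemma \ref{lemma22} to the bounded measurable function $\varphi$, obtaining the pointwise bound $({\mathcal L}\varphi)^+(x) \le ({\mathcal L}(\varphi^+))(x)$ for every $x \in X$. Since $\varphi$ is bounded and measurable, so is $\varphi^+$, hence $\varphi^+ \in L_{\infty}$ and, by Proposition \ref{propositionCalL:L_inftyToL_infty}, ${\mathcal L}(\varphi^+) \in L_{\infty}$ as well; in the same way $({\mathcal L}\varphi)^+ \in L_{\infty}$. Integrating the pointwise inequality against $\mu$ (both integrands being well defined by membership in $L_{\infty}$) yields
$$\int ({\mathcal L}\varphi)^+ \, d\mu \le \int {\mathcal L}(\varphi^+)\, d\mu.$$

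The final step is to convert the right-hand integral using the duality relation. By equality (\ref{eqn00}) applied to the function $\varphi^+ \in L_{\infty}$, one has $\int {\mathcal L}(\varphi^+)\, d\mu = \int \varphi^+ \, d{\mathcal L}^*\mu$, and invoking the hypothesis ${\mathcal L}^*\mu = \mu$ this equals $\int \varphi^+ \, d\mu$. Chaining these relations gives $\int ({\mathcal L}\varphi)^+ \, d\mu \le \int \varphi^+\, d\mu$, which is exactly the reformulated statement, and translating back to the level sets completes the proof. I do not anticipate a genuine obstacle here: the only point demanding care is ensuring that all functions involved lie in $L_{\infty}$ so that the extended duality (\ref{eqn00}) and the integrals are legitimate, which Proposition \ref{propositionCalL:L_inftyToL_infty} guarantees. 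The conceptual content is simply that Lemma \ref{lemma22} trades $({\mathcal L}\varphi)^+$ for ${\mathcal L}(\varphi^+)$, after which the fixed-point property of $\mu$ removes the operator.
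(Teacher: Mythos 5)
Your proposal is correct and follows essentially the same route as the paper's own proof: rewrite both integrals as integrals of positive parts, apply Lemma \ref{lemma22} to get $({\mathcal L}\varphi)^+ \leq {\mathcal L}(\varphi^+)$ pointwise, and then use the extended duality (\ref{eqn00}) together with ${\mathcal L}^*\mu = \mu$ to replace $\int {\mathcal L}(\varphi^+)\, d\mu$ by $\int \varphi^+\, d\mu$. The paper compresses exactly this chain into a single display, while you additionally make explicit the $L_\infty$ membership needed to legitimize the integrals, which is a harmless refinement rather than a difference in method.
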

\begin{proof}
Applying equality (\ref{eqn00}), taking into account that   ${\mathcal L}^* \mu = \mu$, and applying Lemma \ref{lemma22}, we obtain:
   $$ \displaystyle \int_{\varphi > 0} \varphi \, d \mu = \int  \varphi^+ \, d \mu =  \int  \varphi ^+ \, d {\mathcal L}^* \mu = \int  {\mathcal L} (\varphi^+) \, d \mu \geq \int ({\mathcal L}  \varphi)^+  \, d \mu = \int _{{\mathcal L} \varphi >0} {\mathcal L} \varphi \, d \mu.$$
\end{proof}

\subsection
{\bf Proof of Theorem \ref{TheoremMaximalErgodic}.}

\begin{proof}
The sequence $\{\varphi_n\}_{n \geq 1}$ is non decreasing. Thus, for all $n \geq 1$, the set $E_n := \{x \in X: \varphi_n> 0\}$ is contained in $E_{n+1}$. Since $E (\varphi) = \bigcup_{n \geq 1} E_n$, we obtain:
   $ \displaystyle \int_{E(\varphi)} \varphi \, d \mu = \lim_{n \rightarrow + \infty} \int_{E_n} \varphi \, d \mu.$
  So, to  prove inequality (\ref{eqnTeoErgMaximal}), it is enough to prove the following inequality:
  \begin{equation}
 \label{eqn11}
 I_n:= \int_{E_n} \varphi \, d \mu \geq 0  \ \ \forall \ n \geq 1 \ \ \ \  \ \ \ \mbox{(to be proved)}.\end{equation}
We have
\begin{equation}
\label{eqn12}
I_n = \int_{\varphi_n > 0} \varphi \, d \mu = \int _{\varphi_n >0, \ {\mathcal L} \varphi_n \leq 0} \varphi \, d \mu + \int_{\varphi_n >0, \ {\mathcal L} \varphi_n > 0} \varphi \, d \mu. \end{equation}
For all $j \geq 1$, denote $\psi_j := \varphi + {\mathcal L} \varphi + \ldots + {\mathcal L}^{j-1} \varphi$. We assert that \begin{equation}\label{eqn13}{\mathcal L}(\max_{1 \leq j \leq n} \psi_j) \geq \max_{ 1 \leq j \leq n} {\mathcal L} \psi_j.\end{equation}
In fact, $\max_{1 \leq j \leq n} \psi_j \geq \psi_i $ for all $1 \leq i \leq n$. Thus
$$ ({\mathcal L}(\max_{1 \leq j \leq n} \psi_j))(x)= \int  \max_{1 \leq j \leq n} \psi_j (y) P(x, dy) \geq \int \psi_i (y) P(x, dy)= ({\mathcal L} \psi_i) (x) \ \ \forall \ x \in X \ \ \forall \ i= 1, \ldots, n. $$
Now, inequality (\ref{eqn13}) is proved.

Let us compute both integrals at right in equality (\ref{eqn12}):
$$ {\mathcal L} \varphi_n =  {\mathcal L} (\max_{1 \leq j \leq n} \psi_j) \geq \max_{ 1\leq j \leq n} {\mathcal L} \psi_j = \max \{{\mathcal L} \varphi, {\mathcal L} \varphi + {\mathcal L}^2 \varphi, \ldots, {\mathcal L} \varphi + {\mathcal L}^2 \varphi + \ldots +  {\mathcal L}^n \varphi\}.  $$
Therefore,   ${\mathcal L} \varphi_n \leq 0$ implies $\max \{{\mathcal L} \varphi, {\mathcal L} \varphi + {\mathcal L}^2 \varphi, \ldots, {\mathcal L} \varphi + {\mathcal L}^2  + \ldots +  {\mathcal L}^n \varphi\} \leq 0$, hence:
$$({\mathcal L} \varphi_n)(x) \leq 0 \ \ \Rightarrow \ \ \varphi_n (x) = \max\{ \varphi (x), (\varphi + {\mathcal L} \varphi) (x),  \ldots, (\varphi {\mathcal L} \varphi + \ldots +  {\mathcal L}^{n-1} \varphi)(x)\} = \varphi (x).$$
Thus, the first integral at right in equality (\ref{eqn12})  can be written as follows:
\begin{equation}
\label{eqn12-1} \int_{\varphi_n > 0, \ {\mathcal L}\varphi_n \leq 0} \varphi \, d \mu = \int_{\varphi_n > 0, \ {\mathcal L}\varphi_n \leq 0} \varphi_n \, d \mu.
\end{equation}
Now, let us compute the second integral at right in equality (\ref{eqn12}). Applying inequality (\ref{eqn13})  we obtain:
$$\varphi + {\mathcal L} \varphi_n = \varphi + {\mathcal L} (\max_{1 \leq  j\leq n} \psi_j) \geq
 \varphi + \max_{1 \leq j \leq n} ({\mathcal L} \psi_j) = $$
 $$\varphi + \max \{{\mathcal L}\varphi, {\mathcal L}\varphi +{\mathcal L}^2\varphi, \ldots, {\mathcal L}\varphi + {\mathcal L}^2\varphi + \ldots +{\mathcal L}^n\varphi\} = $$ \begin{equation}
 \label{eqn13a} \max   \{\varphi + {\mathcal L}\varphi, \varphi + {\mathcal L}\varphi +{\mathcal L}^2\varphi, \ldots, \varphi + {\mathcal L}\varphi + {\mathcal L}^2\varphi + \ldots +{\mathcal L}^n\varphi\}.
 \end{equation}
Besides
\begin{equation}
 \label{eqn13b}{\mathcal L} \varphi_n > 0 \ \ \Rightarrow \ \ \varphi + {\mathcal L} \varphi_n > \varphi.\end{equation}
Joining inequalities (\ref{eqn13a}) and (\ref{eqn13b}), we deduce:
$${\mathcal L}\varphi_n > 0 \ \ \Rightarrow \ \ \varphi + {\mathcal L} \varphi_n  \geq \max   \{\varphi, \ \varphi + {\mathcal L}\varphi,   \ldots, \varphi + {\mathcal L}\varphi   + \ldots +{\mathcal L}^n\varphi\} \geq$$
$$\max \{\varphi, \ \varphi + {\mathcal L}\varphi,   \ldots, \varphi + {\mathcal L}\varphi +  \ldots +{\mathcal L}^{n-1}\varphi\} = \varphi_n. $$
In brief, we have proved that
$${\mathcal L}\varphi_n > 0 \ \ \Rightarrow \ \ \varphi + {\mathcal L} \varphi_n  \geq  \varphi_n \ \ \Rightarrow \ \ \varphi \geq \varphi_n - {\mathcal L} \varphi_n. $$
Substituting the latter inequality in the second integral at right of equality (\ref{eqn12}), we obtain:
\begin{equation}
\label{eqn12-2} \int_{\varphi_n > 0, \ {\mathcal L}\varphi_n  > 0} \varphi \, d \mu \geq  \int_{\varphi_n > 0, \ {\mathcal L}\varphi_n  > 0} \varphi_n \, d \mu - \int_{\varphi_n > 0, \ {\mathcal L}\varphi_n  > 0} {\mathcal L} \varphi_n \, d \mu. \end{equation}
Now, we use equality (\ref{eqn12-1}) and inequality (\ref{eqn12-2}) to obtain a lower bound of the integral (\ref{eqn12}):
$$I_n \geq \int_{\varphi_n > 0, {\mathcal L} \varphi_n \leq 0} \varphi_n \, d \mu + \int_{\varphi_n > 0, {\mathcal L} \varphi_n > 0} \varphi_n \, d \mu  - \int_{\varphi_n > 0, {\mathcal L} \varphi_n > 0} ({\mathcal L} \varphi_n) \, d \mu   =   \int_{\varphi_n > 0} \varphi_n \, d \mu    - \int_{\varphi_n > 0, {\mathcal L} \varphi_n > 0} ({\mathcal L} \varphi_n) \, d \mu.  $$
Since $\{ \varphi_n > 0, \ {\mathcal L} \varphi_n > 0 \} \subset \{  {\mathcal L} \varphi_n > 0 \} $ and the function ${\mathcal L}\varphi_n$ is positive on those sets, we obtain:
$$I_n \geq \int_{\varphi_n > 0 } \varphi_n \, d \mu -   \int_{{\mathcal L} \varphi_n > 0} ({\mathcal L} \varphi_n) \, d \mu. $$
Finally, applying Lemma \ref{lemmaTeoMaximalErgodic}, the difference at right in the latter inequality is non negative. We conclude that
 $I_n \geq 0$, proving assertion (\ref{eqn11}) as wanted, and ending the proof of Theorem \ref{TheoremMaximalErgodic}.
\end{proof}

\subsection{Proof of Corollary \ref{corollaryMaximalErgTheo}.}

To prove Corollary \ref{corollaryMaximalErgTheo}, we need a previous lemma:

\begin{Lemma}
\label{lemmaForCorollaryMaximalErgTheo}
Let $\mu \in {\mathcal M}$ ($\mu$ is not necessarily fixed by ${\mathcal L}^*$). Let $A \subset X$ be a measurable set that is $\mu$-a.e. ${\mathcal L}$-invariant.
Then, for any measurable bounded function $\varphi: X \mapsto \mathbb{C}$ the following equality holds:
$$({\mathcal L} (\chi_A \cdot \varphi)) (x) = \chi_A (x) \cdot ({\mathcal L} \varphi) (x) \ \ \ \mu\mbox{-a.e. } x \in X.$$
\end{Lemma}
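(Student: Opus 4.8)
The plan is to push everything down to the pointwise integral representation of ${\mathcal L}$ against the transition probabilities $P(x,\cdot)$ supplied by Proposition \ref{proposition-P(x,cdot)} together with the extension formula (\ref{eqn001}), and then to read off the hypothesis through the equivalence recorded in Remark \ref{RemarkInvariantSetsWithP(x,A)}. First I would observe that, since $\varphi$ is bounded and measurable and $\chi_A \in \{0,1\}$, the product $\chi_A \cdot \varphi$ is again bounded and measurable, hence belongs to $L_{\infty}$; so by (\ref{eqn001}) the function ${\mathcal L}(\chi_A\cdot \varphi)$ is defined at every point by
$$({\mathcal L}(\chi_A\cdot\varphi))(x) = \int \chi_A(y)\,\varphi(y)\,P(x,dy) = \int_A \varphi(y)\,P(x,dy),$$
while, in the same way, $({\mathcal L}\varphi)(x) = \int \varphi(y)\,P(x,dy)$ for all $x \in X$.

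Next I would bring in the assumption that $A$ is $\mu$-a.e. ${\mathcal L}$-invariant. By Remark \ref{RemarkInvariantSetsWithP(x,A)} this is exactly the statement that $\chi_A(x) = P(x,A)$ for $\mu$-a.e. $x \in X$. Let $N$ denote the $\mu$-null set off which this equality fails; then for every $x \notin N$ the number $P(x,A)$ equals $\chi_A(x)$, and in particular lies in $\{0,1\}$. The argument now divides into the two corresponding cases, using only that $P(x,\cdot)$ is a probability measure and that $\varphi$ is bounded, say $|\varphi|\le M$.

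If $\chi_A(x)=1$, then $P(x,A)=1$, so $P(x,X\setminus A)=0$, and hence $|\int_{X\setminus A}\varphi\,P(x,dy)|\le M\,P(x,X\setminus A)=0$; consequently $\int_A\varphi\,P(x,dy)=\int\varphi\,P(x,dy)=({\mathcal L}\varphi)(x)=\chi_A(x)\,({\mathcal L}\varphi)(x)$. If instead $\chi_A(x)=0$, then $P(x,A)=0$, so $|\int_A\varphi\,P(x,dy)|\le M\,P(x,A)=0$, and therefore $({\mathcal L}(\chi_A\cdot\varphi))(x)=0=\chi_A(x)\,({\mathcal L}\varphi)(x)$. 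In either case the claimed identity holds for every $x\notin N$, that is, for $\mu$-a.e. $x \in X$, which is precisely the conclusion of the lemma.

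I do not expect a serious obstacle here: once the representation via $P(x,\cdot)$ is in place the proof is essentially bookkeeping, with the invariance hypothesis entering only to pin down $P(x,A)\in\{0,1\}$ off a null set. The one point deserving care is that the conclusion is genuinely only $\mu$-a.e., since ${\mathcal L}$-invariance is an a.e. notion and we control $P(x,A)$ only away from $N$; it cannot be upgraded to hold for every $x$. I would also remark that no reality assumption on $\varphi$ is needed, because the estimate $|\int_E\varphi\,P(x,dy)|\le M\,P(x,E)$ used in both cases is equally valid for complex-valued bounded measurable $\varphi$, so there is no need to pass through real and imaginary parts.
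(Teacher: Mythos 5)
Your proof is correct and follows essentially the same route as the paper's: both reduce ${\mathcal L}$ to the integral against the transition probabilities $P(x,\cdot)$, invoke the invariance hypothesis to get $P(x,A)=\chi_A(x)\in\{0,1\}$ off a $\mu$-null set, and then split into the cases $x\in A$ and $x\notin A$, killing the integral over the set of $P(x,\cdot)$-measure zero. The only cosmetic difference is that you make the vanishing explicit via the bound $\bigl|\int_E \varphi\, P(x,dy)\bigr|\le M\,P(x,E)$, which the paper leaves implicit.
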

\begin{proof} For $\mu$-a.e. $x \in X$ we have $({\mathcal L} \chi_A )(x) = P (x,A) = \chi_A(x)$. Therefore
$$P(x,A) = 1 \ \ \mbox {if }\ \  x \in A; \ \ \ \ P(x,A) = 0 \mbox \ \ \mbox{if }\ \  x \not \in A, \ \ \ \ \ \ \mu\mbox{-a.e. } x \in X.$$
$$x \in A \ \ \Rightarrow  \ \ \chi_A (x) \cdot ({\mathcal L} \varphi) (x) = {(\mathcal L} \varphi)(x) = \int     \varphi(y) \ P (x, dy) =  \int_A \varphi(y) \ P(x, dy) + \int _{X \setminus A} \varphi (y) \ P(x, dy).$$ But for $\mu$-a.e. $x \in A$ we have $P(x,X \setminus A) = 0$. So, the integral at right in the above equality is zero. We obtain:  $$\mbox{for } \mu\mbox{-a.e. }x \in A,\ \ \chi_A (x) \cdot ({\mathcal L} \varphi) (x) = \int_A \varphi(y) \ P(x, dy) = \int \chi_A(y) \cdot \varphi(y) \ P(x, dy) = ({\mathcal L} (\chi_A \varphi))(x).$$
We have proved   Lemma \ref{lemmaForCorollaryMaximalErgTheo} for $\mu$-a.e. $x \in A$. Now, let us consider $x \not \in A$:
$$x \not \in A \ \ \Rightarrow  \ \ \chi_A (x) \cdot ({\mathcal L} \varphi) (x) =  0. $$ Besides, for $\mu$-a.e. $x \not \in A$ we have $P(x,A) = 0$. We obtain:  $$\mbox{for } \mu\mbox{-a.e. }x \not \in A, \ \ \chi_A (x) \cdot ({\mathcal L} \varphi) (x) =  0 =  \int _A \varphi(y) \ P(x, dy) = \int \chi_A(y) \cdot \varphi(y) \ P(x, dy)= ({\mathcal L} (\chi_A \varphi))(x). $$
ending the proof of Lemma \ref{lemmaForCorollaryMaximalErgTheo}. \end{proof}

\noindent {\bf End of the Proof of Corollary \ref{corollaryMaximalErgTheo}.}

\begin{proof}
First, let us prove Corollary \ref{corollaryMaximalErgTheo} in the particular case $\alpha= 0$.
We consider the measurable real function $$g_n:= \chi_A \cdot \varphi_n = \sum_{j= 0}^{n-1} \chi_A ({\mathcal L}^j \varphi).$$
Applying Lemma \ref{lemmaForCorollaryMaximalErgTheo}, we obtain:
$$g_n = \sum_{j= 0}^{n-1}    {\mathcal L}^j (\chi_A \cdot \varphi) \ \ \ \mu\mbox{-a.e.}$$

By construction of $g_n$, if $x \not \in A$ then $g_n(x) = 0$ for all $n \geq 1$. Therefore, if $\sup_{n \geq 1} g_n(x) > 0$ then $x \in A$. Conversely, by  hypothesis $A \subset C_0$, hence $\sup_{n \geq 1} g_n(x) > 0$ if $x \in A$. We have proved that
$$A = \{ x \in X: \ \ \sup_{n \geq 1} g_n(x) > 0\}.$$
Applying Theorem \ref{TheoremMaximalErgodic} we obtain:
$$ \int_A \chi_A \, \varphi \, d \mu  \geq 0; \ \ \ \ \int_A \, \varphi \, d \mu \geq 0 = \alpha \, \cdot \mu(A) \ \ \ \mbox{ if } \ \ \alpha = 0.$$
We have proved Corollary \ref{corollaryMaximalErgTheo} in the particular case   $\alpha = 0$.
Now, let us prove it for any real value of $\alpha$.
Consider the function
 $h_n := \varphi_n - n \cdot \alpha.$
Since ${\mathcal L} \alpha = \alpha$ and ${\mathcal L}$ is linear, we obtain:
$$h_n = \sum_{j=0}^{n-1} {\mathcal L}^j(\varphi- \alpha).$$
Besides $$   C_{\alpha}(\varphi) := \{ x \in X \colon \sup_{n \geq 1} \frac{\varphi_n (x)}{n} > \alpha\} = \{x \in X \colon \sup_{n \geq 1} \frac{h_n (x)}{n} > 0\} = C_0(h_1). $$
Thus, applying to the measurable function $h_1$ the result proved in the case $\alpha  = 0$, we conclude that
$$ \int _A \ (\varphi - \alpha )\, d \mu \geq 0; \ \ \mbox{ hence } \int _A \varphi \geq \alpha \cdot \mu(A).$$
\end{proof}

\subsection{Proof of Corollary \ref{corollaryMaximalErgTheo2}}

\begin{proof}
We apply Corollary \ref{corollaryMaximalErgTheo} to the function $-\varphi$, with $ - \beta$ instead of $\alpha$:
$$\int_A - \varphi \, d \mu \geq - \beta \mu(A), \ \ \mbox{ hence } \ \ \int_A  \varphi \, d \mu \leq   \beta \mu(A).$$
\end{proof}

%

\section{Kakutani's Ergodic Theorem}
 The purpose of this section is to give a  proof of the following version of Kakutani's Ergodic Theorem, applied to measures that are stationary (i.e. invariant under the transfer operator ${\mathcal L}^*$),  using the Maximal Ergodic Theorem  \ref{TheoremMaximalErgodic} that we have already proved in Section \ref{sectionErgodicTheorems}.
 
\begin{Theorem} {\bf (Kakutani's Ergodic Theorem for ${\mathcal L}^*$-invariant measures)} \label{TheoremMainErgodic}

If ${\mathcal L}^* \mu = \mu$, then for any $\varphi \in L_{\infty}$
there exists
$$\widetilde \varphi(x) = \lim_{n \rightarrow + \infty} \frac{1}{n} \sum_{j= 0}^{n-1} ({\mathcal L}^j \varphi) (x) \ \ \ \mu\mbox{-a.e. }x \in X.$$
\end{Theorem}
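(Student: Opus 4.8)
The plan is to transcribe the classical deduction of Birkhoff's theorem from a maximal inequality into the present operator setting, using Corollaries \ref{corollaryMaximalErgTheo} and \ref{corollaryMaximalErgTheo2} in place of the usual maximal ergodic theorem. Since ${\mathcal L}$ and the integral are linear, I would first reduce to a real-valued $\varphi \in L_\infty$ by splitting into real and imaginary parts. Writing $\varphi_n := \sum_{j=0}^{n-1}{\mathcal L}^j\varphi$ and setting $\overline{\varphi} := \limsup_{n}\frac{1}{n}\varphi_n$ and $\underline{\varphi} := \liminf_n \frac{1}{n}\varphi_n$, both functions are bounded by $\sup|\varphi|$ and $\mu$-a.e. measurable, hence lie in $L_\infty$, and $\underline{\varphi}\le\overline{\varphi}$ everywhere. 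The goal becomes $\mu(\{\overline{\varphi}>\underline{\varphi}\})=0$. For rationals $\alpha>\beta$ put $E_{\alpha,\beta}:=\{\overline{\varphi}>\alpha\}\cap\{\underline{\varphi}<\beta\}$; since $\{\overline{\varphi}>\underline{\varphi}\}=\bigcup_{\alpha>\beta}E_{\alpha,\beta}$, it suffices to prove $\mu(E_{\alpha,\beta})=0$ for each such pair. The two corollaries will give $\int_{E}\varphi\,d\mu\ge\alpha\,\mu(E)$ and $\int_{E}\varphi\,d\mu\le\beta\,\mu(E)$ as soon as I know that $E=E_{\alpha,\beta}$ is $\mu$-a.e. ${\mathcal L}$-invariant and contained in $C_\alpha\cap B_\beta$ (the inclusion being immediate from the definitions of $\overline{\varphi},\underline{\varphi}$); then $(\alpha-\beta)\mu(E)\le 0$ with $\alpha>\beta$ forces $\mu(E)=0$.

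The real content is therefore to establish $\mu$-a.e. ${\mathcal L}$-invariance of the level sets $\{\overline{\varphi}>\alpha\}$ and $\{\underline{\varphi}<\beta\}$, which I would do in two stages. First I show that $\overline{\varphi}$ and $\underline{\varphi}$ are fixed by ${\mathcal L}$ as functions, $\mu$-a.e. Using the representation $({\mathcal L}f)(x)=\int f\,dP(x,\cdot)$ from Proposition \ref{proposition-P(x,cdot)} and the identity ${\mathcal L}\varphi_n=\varphi_{n+1}-\varphi$, one computes $\limsup_n {\mathcal L}(\frac{1}{n}\varphi_n)=\limsup_n \frac{\varphi_{n+1}-\varphi}{n}=\overline{\varphi}$ pointwise. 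The reverse Fatou lemma applied to the probability kernels $P(x,\cdot)$ (valid since the $\frac{1}{n}\varphi_n$ are uniformly bounded) yields $\limsup_n{\mathcal L}(\frac{1}{n}\varphi_n)\le{\mathcal L}(\limsup_n\frac{1}{n}\varphi_n)$, hence ${\mathcal L}\overline{\varphi}\ge\overline{\varphi}$ everywhere; symmetrically ${\mathcal L}\underline{\varphi}\le\underline{\varphi}$. Integrating against $\mu$ and using stationarity with (\ref{eqn00}), $\int{\mathcal L}\overline{\varphi}\,d\mu=\int\overline{\varphi}\,d{\mathcal L}^*\mu=\int\overline{\varphi}\,d\mu$, so the nonnegative function ${\mathcal L}\overline{\varphi}-\overline{\varphi}$ has zero integral and thus vanishes $\mu$-a.e.; likewise ${\mathcal L}\underline{\varphi}=\underline{\varphi}$ $\mu$-a.e.

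The second, and main, obstacle is to pass from the pointwise relation ${\mathcal L}g=g$ (holding $\mu$-a.e.) to invariance of the level sets of $g$, something automatic in the deterministic case but which here must use the averaging nature of ${\mathcal L}$. I would exploit convexity: Jensen's inequality for the probability measures $P(x,\cdot)$ gives ${\mathcal L}(g^2)\ge({\mathcal L}g)^2$ everywhere. Since ${\mathcal L}g=g$ $\mu$-a.e. one has $({\mathcal L}g)^2=g^2$ $\mu$-a.e., while stationarity gives $\int{\mathcal L}(g^2)\,d\mu=\int g^2\,d\mu$; comparing, the nonnegative function ${\mathcal L}(g^2)-({\mathcal L}g)^2$ integrates to zero and hence vanishes $\mu$-a.e. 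But ${\mathcal L}(g^2)(x)-({\mathcal L}g)^2(x)$ is exactly the variance of $g$ under $P(x,\cdot)$, so its vanishing means that for $\mu$-a.e. $x$ the function $g$ equals the constant ${\mathcal L}g(x)=g(x)$ for $P(x,\cdot)$-a.e. $y$, i.e. $P(x,\{y:g(y)=g(x)\})=1$. Consequently, for $A=\{g>\alpha\}$ one reads off $P(x,A)=\chi_A(x)$ for $\mu$-a.e. $x$ (if $g(x)>\alpha$ the full $P(x,\cdot)$-mass sits in $A$, otherwise in its complement), which by Remark \ref{RemarkInvariantSetsWithP(x,A)} is precisely $\mu$-a.e. ${\mathcal L}$-invariance of $A$; the same argument applies to $\{\underline{\varphi}<\beta\}$.

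Finally I would assemble the pieces. Applying this to $g=\overline{\varphi}$ and $g=\underline{\varphi}$ shows $\{\overline{\varphi}>\alpha\}$ and $\{\underline{\varphi}<\beta\}$ are $\mu$-a.e. ${\mathcal L}$-invariant, and since the property that $P(x,\cdot)$ charges either a set or its complement with full mass is stable under finite intersections, $E_{\alpha,\beta}$ is $\mu$-a.e. ${\mathcal L}$-invariant as well. With $E_{\alpha,\beta}\subset C_\alpha$ and $E_{\alpha,\beta}\subset B_\beta$, Corollaries \ref{corollaryMaximalErgTheo} and \ref{corollaryMaximalErgTheo2} give $\alpha\,\mu(E_{\alpha,\beta})\le\int_{E_{\alpha,\beta}}\varphi\,d\mu\le\beta\,\mu(E_{\alpha,\beta})$, forcing $\mu(E_{\alpha,\beta})=0$. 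Taking the union over all rational pairs $\alpha>\beta$ yields $\overline{\varphi}=\underline{\varphi}$ $\mu$-a.e., which is exactly the claimed existence of the limit; the complex case follows by recombining real and imaginary parts.
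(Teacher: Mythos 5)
Your proof is correct, and its outer skeleton is the same as the paper's: reduce to real $\varphi$, show that for rational $\alpha>\beta$ the set $E_{\alpha,\beta}=\{\overline{\varphi}>\alpha\}\cap\{\underline{\varphi}<\beta\}$ is $\mu$-a.e.\ ${\mathcal L}$-invariant, squeeze $\alpha\,\mu(E_{\alpha,\beta})\le\int_{E_{\alpha,\beta}}\varphi\,d\mu\le\beta\,\mu(E_{\alpha,\beta})$ via Corollaries \ref{corollaryMaximalErgTheo} and \ref{corollaryMaximalErgTheo2}, and take the countable union over rational pairs. Where you genuinely diverge is in the technical heart, the invariance of the level sets. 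The paper gets there through Lemmas \ref{lemmaInvariantFunctions} and \ref{lemmaInvariantFunctions2}: from ${\mathcal L}g=g$ $\mu$-a.e.\ it deduces ${\mathcal L}(g^+)=g^+$ $\mu$-a.e.\ (via ${\mathcal L}(g^+)\ge({\mathcal L}g)^+$ and integration against the stationary $\mu$), argues pointwise that $P(x,\{g\ge\alpha\})=1$ for $\mu$-a.e.\ $x$ in that set, and only then reaches the strict level sets $\{g>\alpha\}$, $\{g<\beta\}$ by complements and countable unions; a further lemma (Lemma \ref{lemmaMainErgodicTheorem}) handles the intersection. You replace all of this with a conditional-variance argument: Jensen gives ${\mathcal L}(g^2)-({\mathcal L}g)^2\ge 0$ everywhere, stationarity makes it integrate to zero, hence it vanishes $\mu$-a.e., and zero variance of $g$ under $P(x,\cdot)$ means $P\bigl(x,\{y\colon g(y)=g(x)\}\bigr)=1$ for $\mu$-a.e.\ $x$. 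This intermediate statement is strictly stronger than what the paper's lemmas produce, and it pays off: every level set (with $\ge$, $>$, or $<$) is invariant at once, and stability under intersection is transparent, so the countable-union bookkeeping of Lemma \ref{lemmaInvariantFunctions2} disappears. The trade-off is that the paper's route uses only positivity and linearity of ${\mathcal L}$, while yours additionally uses convexity and that $g^2\in L_\infty$ (harmless here, since $g$ is bounded). You also fill a gap the paper waves at: its Lemma \ref{lemmaMainErgodicTheorem} merely asserts it is ``standard to check'' that $\limsup_n\frac1n\varphi_n$ and $\liminf_n\frac1n\varphi_n$ are ${\mathcal L}$-invariant, whereas your argument via ${\mathcal L}\varphi_n=\varphi_{n+1}-\varphi$, reverse Fatou for the kernels $P(x,\cdot)$, and integration against $\mu$ is precisely the missing verification, and it correctly records that the invariance holds only $\mu$-a.e., which is all the subsequent steps need.
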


Before proving Theorem \ref{TheoremMainErgodic} we will prove a lemmata:
\begin{Lemma} \label{lemmaInvariantFunctions}
Let $\varphi: X \mapsto \mathbb{R}$ be a bounded measurable function, and let $\mu \in {\mathcal M}$ such that ${\mathcal L}^* \mu = \mu$.
Assume that $\varphi$ is ${\mathcal L}$-invariant $\mu$-a.e.; precisely
$$({\mathcal L} \varphi) (x) = \varphi (x) \ \ \mu\mbox{-a.e. } x \in X.$$
Then, for any real number $\alpha $  the set $$A_{\alpha} = \{x \in X \colon \varphi(x) \geq \alpha\} $$ is ${\mathcal L}$-invariant $\mu$-a.e.; namely,
$$\chi_{A_{\alpha}} (x) = ({\mathcal L} \chi_{A_{\alpha}}) (x) =  P (x, A_{\alpha}) \ \ \ \mu\mbox{-a.e. } x \in X.$$
\end{Lemma}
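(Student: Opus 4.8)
The plan is to exploit the convexity of $t \mapsto t^2$ together with the fact that each $P(x,\cdot)$ is a probability measure, in order to show that the noise kernel does not spread $\varphi$ at all: for $\mu$-a.e.\ $x$ the function $\varphi$ is $P(x,\cdot)$-almost surely equal to the constant $\varphi(x)$. Once this is established, the invariance of every level set is immediate. First I would note that $\varphi^2$ is again bounded and measurable, hence lies in $L_\infty$, so that both ${\mathcal L}(\varphi^2)$ and the identity (\ref{eqn00}) apply to it. Applying Jensen's inequality to the probability measure $P(x,\cdot)$ of Proposition \ref{proposition-P(x,cdot)} gives, for every $x \in X$,
\[
({\mathcal L}(\varphi^2))(x) = \int \varphi^2(y)\,P(x,dy) \;\geq\; \Big(\int \varphi(y)\,P(x,dy)\Big)^2 = ({\mathcal L}\varphi)^2(x).
\]
Since $({\mathcal L}\varphi)(x) = \varphi(x)$ for $\mu$-a.e.\ $x$ by hypothesis, this yields ${\mathcal L}(\varphi^2) \geq \varphi^2$ $\mu$-a.e.

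Next I would integrate this inequality against $\mu$. Using (\ref{eqn00}) together with the invariance ${\mathcal L}^*\mu = \mu$,
\[
\int {\mathcal L}(\varphi^2)\,d\mu = \int \varphi^2\,d{\mathcal L}^*\mu = \int \varphi^2\,d\mu,
\]
so that $\int ({\mathcal L}(\varphi^2) - \varphi^2)\,d\mu = 0$. As the integrand is non-negative $\mu$-a.e., it must vanish $\mu$-a.e., and therefore equality holds in the Jensen estimate for $\mu$-a.e.\ $x$. Equality in Jensen's inequality for the strictly convex function $t \mapsto t^2$ means precisely that the variance of $\varphi$ with respect to $P(x,\cdot)$ is zero; hence, for $\mu$-a.e.\ $x$, the function $\varphi$ equals its $P(x,\cdot)$-mean for $P(x,\cdot)$-a.e.\ $y$, and that mean is $({\mathcal L}\varphi)(x) = \varphi(x)$.

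Finally I would read off the invariance of the level sets. Fix $\alpha \in \mathbb{R}$ and work on the full-measure set where $\varphi(y) = \varphi(x)$ for $P(x,\cdot)$-a.e.\ $y$. If $\varphi(x) \geq \alpha$, then $P(x,\cdot)$-a.e.\ $y$ satisfies $\varphi(y) = \varphi(x) \geq \alpha$, so lies in $A_\alpha$, giving $P(x, A_\alpha) = 1 = \chi_{A_\alpha}(x)$; if $\varphi(x) < \alpha$, then $P(x,\cdot)$-a.e.\ $y$ lies outside $A_\alpha$, giving $P(x, A_\alpha) = 0 = \chi_{A_\alpha}(x)$. Since $({\mathcal L}\chi_{A_\alpha})(x) = P(x, A_\alpha)$ by the extension (\ref{eqn001}), this is exactly the $\mu$-a.e.\ ${\mathcal L}$-invariance of $A_\alpha$ in the sense of Remark \ref{RemarkInvariantSetsWithP(x,A)}.

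The only genuinely substantive step is the passage from the invariance of $\varphi$ to the zero-variance conclusion, i.e.\ the observation that squaring $\varphi$ and combining Jensen with the invariance of $\mu$ forces $\varphi$ to be $P(x,\cdot)$-essentially constant; everything preceding and following it is routine bookkeeping. The points to handle with care are the justification that equality in Jensen indeed forces $P(x,\cdot)$-essential constancy, and the remark that the two exceptional $\mu$-null sets arising from the a.e.\ statements ${\mathcal L}\varphi = \varphi$ and ${\mathcal L}(\varphi^2) = \varphi^2$ may be merged into a single $\mu$-null set on which the final argument is carried out.
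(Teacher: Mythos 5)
Your proof is correct, and it takes a genuinely different route from the paper's. The paper works with the positive part $\varphi^+$: it uses its Lemma \ref{lemma22} (${\mathcal L}(\varphi^+) \geq ({\mathcal L}\varphi)^+$), integrates against $\mu$ to force ${\mathcal L}(\varphi^+) = \varphi^+$ $\mu$-a.e., then carries out a case analysis on when $\int \varphi^+ \, dP(x,\cdot) \geq \int \varphi \, dP(x,\cdot)$ can be an equality to conclude $P(x,A_0) = 1$ for $\mu$-a.e.\ $x \in A_0$; a second integration step upgrades $P(x,A_0) \geq \chi_{A_0}(x)$ to equality, and the general level $\alpha$ is finally reduced to $\alpha = 0$ by replacing $\varphi$ with $\varphi - \alpha$. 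You instead apply the same structural trick --- a pointwise inequality between ${\mathcal L}$ of a nonlinear image of $\varphi$ and that nonlinear image of ${\mathcal L}\varphi$, whose $\mu$-integral vanishes because ${\mathcal L}^*\mu = \mu$ --- but to $\varphi^2$ via Jensen rather than to $\varphi^+$, and you extract from it a strictly stronger intermediate fact: for $\mu$-a.e.\ $x$ the variance of $\varphi$ under $P(x,\cdot)$ is zero, i.e.\ $\varphi(y) = \varphi(x)$ for $P(x,\cdot)$-a.e.\ $y$. What this buys is considerable: all level sets, with $\geq$, $>$, or $<$, become invariant by the same two-line case analysis, off a single $\mu$-null set independent of $\alpha$, with no reduction to $\alpha = 0$; in particular your argument would also prove the paper's Lemma \ref{lemmaInvariantFunctions2} directly, bypassing its limiting argument over the sets $E_{\alpha + (1/n)}$. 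What the paper's route buys in exchange is self-containedness --- it relies only on positivity and its own Lemma \ref{lemma22}, never invoking Jensen or second moments. Your bookkeeping is also sound: the two exceptional null sets (from ${\mathcal L}\varphi = \varphi$ and ${\mathcal L}(\varphi^2) = \varphi^2$) merge into one, $\varphi^2$ is bounded and measurable so Proposition \ref{propositionCalL:L_inftyToL_infty} and equality (\ref{eqn00}) apply to it, and the identification $({\mathcal L}\chi_{A_\alpha})(x) = P(x,A_\alpha)$ is exactly the extension (\ref{eqn001}).
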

\begin{proof}
By hypothesis $({\mathcal L} \varphi)  (x) = \varphi (x) $ for $\mu$-a.e. $x \in X$. Thus, applying Lemma \ref{lemma22}:
$$ ({\mathcal L} (\varphi^+))(x) \geq ({\mathcal L}  \varphi) ^+ (x) =   \varphi^+ (x) \ \ \ \mu\mbox{-a.e. }   x \in X. $$
Besides, applying equality (\ref{eqn00}) and taking into account that $\mu$ is ${\mathcal L}^*$-invariant, we obtain
$$\int \Big ( {\mathcal L} (\varphi^+)  -  \varphi^+  \Big) \, d \mu = \int \varphi^+ \, d {\mathcal L}^* \mu - \int \varphi^+ \, d \mu = \int \varphi^+ \, d   \mu - \int \varphi^+ \, d \mu = 0.$$
But  the integrated function ${\mathcal L} (\varphi^+)  -  \varphi^+$  is non negative. So it is zero $\mu$-a.e. We have proved that
$$({\mathcal L} \varphi^+)(x)  =   \varphi^+ (x)  \ \ \mbox{ for } \mu\mbox{-a.e. } x \in X.$$
Since $\varphi^+(x)= \chi_{A_0}(x) \cdot \varphi(x)$  for all $x \in X$, we obtain:
\begin{equation}
\label{eqn15} \chi_{A_0}(x) \cdot \varphi(x) = \varphi^+ (x) = ({\mathcal L} (  \varphi^+)) (x) = \int  \varphi^+(y) \ P(x, dy)  \  \ \   \ \mu\mbox{-a.e. } x \in X.\end{equation}
For all $x \in A_0$, we have $\chi_{A_0}(x) = 1$. Therefore, from equality (\ref{eqn15}) we deduce: \begin{equation}
\label{eqn14}
\mbox{For } \mu\mbox{-a.e. } x \in A_0, \ \ \ \varphi(x) = \int  \varphi^+(y) \ P(x, dy) \geq \int  \varphi (y) \ P(x, dy),\end{equation}
where the inequality  at right  is an equality   only if $\varphi^+ (y) = \varphi (y)$ for $P(x, \cdot)$-a.e. $y \in X$. This latter assertion occurs  only if $P( x , A_0) = 1$. By hypothesis,   $\varphi(x) = ({\mathcal L} \varphi) (x) = \int \varphi(y) P(x, dy)$. So, the inequality at right in (\ref{eqn14}) is indeed an equality. We have proved that $P(x, A_0) = 1$ for $\mu$-a.e. $x \in A_0$. In other words:
$$\chi_{A_0}(x) = 1 \ \ \ \Rightarrow \ \ \ P (x, A_0) = 1 \ \ \ \mbox{for } { \mu}\mbox{-a.e. } x \in X.$$
Therefore $P(x, A_0) \geq \chi_{A_0}(x)$ for $\mu$-a.e. $x \in X$. But
$$\int (  P (x, A_0) - \chi_{A_0}(x) )\, d \mu(x) = \int ({\mathcal L} \chi_{A_0})(x) \, d \mu(x) - \int \chi_{A_0} (x)\, d \mu(x) = \int \chi_{A_0} d {\mathcal L}^* \mu - \int \chi_{A_0} \, d \mu = 0.$$
So, we deduce that
$$  P (x, A_0) =  \chi_{A_0}(x) \ \ \ \mbox{for } { \mu}\mbox{-a.e. } x \in X,$$
ending the proof of Lemma \ref{lemmaInvariantFunctions} in the case $\alpha= 0$.

Now, let us consider any real value of $\alpha$. Note that
$$ A_{\alpha} (\varphi):= \{ x \in X \colon \varphi \geq \alpha\}= A_0(\varphi- \alpha).$$
Since $\varphi$ and the constant $\alpha$ are $\mu$-a.e. ${\mathcal L}$-invariant, also the function $\varphi - \alpha$ is $\mu$-a.e. ${\mathcal L}$-invariant. So, applying the  case above to the function $\varphi- \alpha$, we deduce that the set $A_0(\varphi- \alpha) = A_{\alpha}(\varphi)$ is $\mu$-a.e. ${\mathcal L}$-invariant, as wanted.
\end{proof}

\begin{Lemma}
 \label{lemmaInvariantFunctions2}

 Let $\varphi: X \mapsto \mathbb{R}$ be a bounded measurable function, and let $\mu \in {\mathcal M}$ such that ${\mathcal L}^* \mu = \mu$.
Assume that $\varphi$ is ${\mathcal L}$-invariant $\mu$-a.e.; precisely
$$({\mathcal L} \varphi) (x) = \varphi (x) \ \ \mu\mbox{-a.e. } x \in X.$$
Then, for any pair of real numbers $\alpha $  and $\beta$, the sets $$C_{\alpha} = \{x \in X \colon \varphi(x) > \alpha\}, \ \ \ \mbox{\em  and } \ \ \ B_{\beta} = \{x \in X \colon \varphi(x) < \beta\} $$ are ${\mathcal L}$-invariant $\mu$-a.e.; namely,
$$\chi_{C_{\alpha}} (x) = ({\mathcal L} \chi_{C_{\alpha}}) (x) =  P (x, C_{\alpha})   \ \ \mbox{ and } \ \
 \chi_{B_{\beta}} (x) = ({\mathcal L} \chi_{B_{\beta}}) (x) =  P (x, B_{\beta}) \ \ \ \mu\mbox{-a.e. } x \in X,$$
\end{Lemma}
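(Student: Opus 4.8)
The plan is to reduce everything to the already-established Lemma \ref{lemmaInvariantFunctions}, which handles the closed super-level sets $A_\alpha = \{\varphi \geq \alpha\}$, by exploiting two elementary manipulations: passing to complements, and replacing $\varphi$ by $-\varphi$. First I would record the \emph{complement principle}: if a measurable set $A$ is $\mu$-a.e. ${\mathcal L}$-invariant, then so is its complement $X \setminus A$. Indeed, using the linearity of ${\mathcal L}$ on $L_{\infty}$ (Proposition \ref{propositionCalL:L_inftyToL_infty}) together with ${\mathcal L}\cdot 1 = 1$ (Definition \ref{DefinitionTransferOperatorL}.2), for $\mu$-a.e. $x \in X$ one has
\[
({\mathcal L}\chi_{X \setminus A})(x) = ({\mathcal L}(1 - \chi_A))(x) = 1 - ({\mathcal L}\chi_A)(x) = 1 - \chi_A(x) = \chi_{X \setminus A}(x).
\]

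For the set $B_{\beta} = \{x \in X \colon \varphi(x) < \beta\}$, I observe that $B_{\beta} = X \setminus A_{\beta}$, where $A_{\beta} = \{x \in X \colon \varphi(x) \geq \beta\}$. Since $\varphi$ is ${\mathcal L}$-invariant $\mu$-a.e., Lemma \ref{lemmaInvariantFunctions} gives that $A_{\beta}$ is $\mu$-a.e. ${\mathcal L}$-invariant, and then the complement principle yields the claim for $B_{\beta}$.

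For the set $C_{\alpha} = \{x \in X \colon \varphi(x) > \alpha\}$, I would write it as the complement $C_{\alpha} = X \setminus \{x \in X \colon \varphi(x) \leq \alpha\}$ and note that $\{\varphi \leq \alpha\} = \{-\varphi \geq -\alpha\}$. Because ${\mathcal L}$ is linear, the function $-\varphi$ is again ${\mathcal L}$-invariant $\mu$-a.e., so Lemma \ref{lemmaInvariantFunctions} applied to $-\varphi$ with $-\alpha$ in the role of $\alpha$ shows that $\{-\varphi \geq -\alpha\}$ is $\mu$-a.e. ${\mathcal L}$-invariant; the complement principle then gives the result for $C_{\alpha}$.

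There is essentially no serious obstacle here beyond careful bookkeeping. The only point requiring attention is that the $\mu$-null exceptional sets produced by each application of Lemma \ref{lemmaInvariantFunctions} and by the complement identity must be combined correctly; since for a fixed pair $(\alpha, \beta)$ only finitely many such null sets occur, their union is again $\mu$-null, so all the pointwise identities above hold simultaneously for $\mu$-a.e. $x \in X$, which is exactly what the statement asserts.
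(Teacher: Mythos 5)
Your proof is correct, and it diverges from the paper's in an interesting way on the set $C_{\alpha}$. For $B_{\beta}$ you and the paper do exactly the same thing: apply Lemma \ref{lemmaInvariantFunctions} to $\{\varphi \geq \beta\}$ and pass to the complement (your ``complement principle'' is legitimate, since the extension of ${\mathcal L}$ to $L_{\infty}$ acts as integration against $P(x,\cdot)$, so ${\mathcal L}(1-\chi_A) = 1 - {\mathcal L}\chi_A$ pointwise and ${\mathcal L}\cdot 1 = 1$; the paper itself uses this identity in the 4th step of the proof of Proposition \ref{propositionCalL:L_inftyToL_infty}). For $C_{\alpha}$, however, the paper writes $C_{\alpha} = \bigcup_{n\geq 1} E_{\alpha + (1/n)}$ as an increasing union of the closed super-level sets $E_{\alpha+(1/n)} = \{\varphi \geq \alpha + 1/n\}$, invokes Lemma \ref{lemmaInvariantFunctions} for each $n$, and then passes to the limit using the dominated convergence theorem applied to the measures $P(x,\cdot)$, accumulating countably many null exceptional sets. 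You instead exploit the symmetry $\varphi \mapsto -\varphi$: since ${\mathcal L}$ is linear, $-\varphi$ is again ${\mathcal L}$-invariant $\mu$-a.e., so $\{\varphi \leq \alpha\} = \{-\varphi \geq -\alpha\}$ is invariant by Lemma \ref{lemmaInvariantFunctions}, and complementation finishes. Your route is more elementary — no limiting argument, no dominated convergence, and only finitely many null sets per pair $(\alpha,\beta)$ — and it makes transparent that strict and non-strict level sets are interchangeable under the reflection trick. What the paper's route buys is a reusable technique: stability of $\mu$-a.e. ${\mathcal L}$-invariance under monotone limits of sets, which is the same mechanism (intersections of invariant sets) that reappears in the proof of Lemma \ref{lemmaMainErgodicTheorem}. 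Both arguments are complete and correct.
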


\begin{proof}
On the one hand, applying Lemma \ref{lemmaInvariantFunctions} we know  that the set
 $ \{x \in X \colon \  \varphi (x) \geq \beta\} $  is ${\mathcal L}$-invariant $\mu$-a.e. Hence, its complement
$B_{\beta}$
is also ${\mathcal L}$-invariant $\mu$-a.e. In other words
$$\chi_{B_{\beta}} = P(X, B_{\beta}) \ \ \mu\mbox{-a.e. } x \in X. $$
On the other hand, for all $n \geq 1$ we  can apply Lemma \ref{lemmaInvariantFunctions} to the set
$$E_{\alpha + (1/n)} := \{x \in X \colon \   \varphi (x) \geq \alpha + (1/n)\}.  $$
We deduce that $E_{\alpha + (1/n)} $ is ${\mathcal L}$-invariant $\mu$-a.e. Thus, \begin{equation}
\label{eqn16}
\chi_{E_{\alpha + (1/n)}} (x) = P(x, E_{\alpha + (1/n)}) \ \ \mu\mbox{-a.e. } x \in X. \end{equation}
Besides,
 $C_{\alpha}  =  \bigcup_{n \geq 1} E_{\alpha + (1/n)}. $
Thus,   $\chi_{C_{\alpha}} (x)  = \lim_{n \rightarrow + \infty} \chi_{E_{\alpha + (1/n)}}(x)$ for all $ x \in X,$
and  by the dominated convergence theorem, we deduce that
 $P(x, C_{\alpha }) = \lim_{n \rightarrow + \infty} P(x, E_{\alpha + (1/n)})  \ \ \ \forall  \ x \in X.$
Finally, taking $n \rightarrow + \infty$ in equality (\ref{eqn16}) with $x $ fixed, we obtain
 $\chi_{C_{\alpha}} (x) = \lim_{n \rightarrow + \infty} P(x, E_{\alpha + (1/n)}) \ \ \mu\mbox{-a.e. } x \in X,$  concluding that $\chi_{C_{\alpha}} (x) = P(x, C_{\alpha})$  for $\mu\mbox{-a.e. } x \in X,$
as wanted.
\end{proof}
\begin{Lemma}
\label{lemmaMainErgodicTheorem}
Let $\varphi: X \mapsto \mathbb{R}$ be a bounded measurable function, and let $\alpha, \beta$ be real numbers.
Construct the set
$$A  = \{x \in X \colon \ \ \ \limsup_{n \rightarrow + \infty} \frac{1}{n} \sum_{j= 0}^{n-1} ({\mathcal L}^j \varphi)(x) > \alpha, \ \ \ \ \liminf_{n \rightarrow + \infty} \frac{1}{n} \sum_{j= 0}^{n-1} ({\mathcal L}^j \varphi)(x) < \beta\}.$$
Then, for any measure $\mu \in {\mathcal M}$ that is fixed by the transfer operator ${\mathcal L}^*$, the set $A $ is ${\mathcal L}$-invariant $\mu$-a.e.

\end{Lemma}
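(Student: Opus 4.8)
The plan is to reduce the statement to the two previous Lemmas \ref{lemmaInvariantFunctions} and \ref{lemmaInvariantFunctions2} by exhibiting the two ``radial limit'' functions
$$\overline{\varphi}(x) := \limsup_{n \rightarrow +\infty}\frac1n\sum_{j=0}^{n-1}(\mathcal{L}^j\varphi)(x), \qquad \underline{\varphi}(x) := \liminf_{n \rightarrow +\infty}\frac1n\sum_{j=0}^{n-1}(\mathcal{L}^j\varphi)(x),$$
and proving that each is $\mathcal{L}$-invariant $\mu$-a.e. Writing $f_n := \frac1n\sum_{j=0}^{n-1}\mathcal{L}^j\varphi$, each $f_n$ belongs to $L_\infty$ by Proposition \ref{propositionCalL:L_inftyToL_infty} and is bounded by $\max_X|\varphi|$ because $\mathcal{L}$ does not increase the supremum norm; consequently $\overline{\varphi}$ and $\underline{\varphi}$ are bounded and, being pointwise $\limsup$/$\liminf$ of $\mu$-a.e. measurable functions, they too lie in $L_\infty$. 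Note also that $A = \{\overline{\varphi}>\alpha\}\cap\{\underline{\varphi}<\beta\}$.

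The heart of the matter is the invariance of these two functions. The telescoping identity $\mathcal{L}\big(\sum_{j=0}^{n-1}\mathcal{L}^j\varphi\big) = \sum_{j=0}^{n-1}\mathcal{L}^j\varphi - \varphi + \mathcal{L}^n\varphi$ gives
$$(\mathcal{L}f_n)(x) = f_n(x) + \tfrac1n\big((\mathcal{L}^n\varphi)(x) - \varphi(x)\big),$$
where the error term tends to $0$ uniformly because $\max_X|\mathcal{L}^n\varphi|\le\max_X|\varphi|$. Hence $\limsup_n(\mathcal{L}f_n)(x)=\overline{\varphi}(x)$ and $\liminf_n(\mathcal{L}f_n)(x)=\underline{\varphi}(x)$ for every $x$. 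Since $(\mathcal{L}f_n)(x)=\int f_n\,dP(x,\cdot)$ and the family $\{f_n\}$ is uniformly bounded, the reverse Fatou lemma (applied against the probability $P(x,\cdot)$) yields
$$\overline{\varphi}(x)=\limsup_n\int f_n\,dP(x,\cdot)\le\int\limsup_n f_n\,dP(x,\cdot)=(\mathcal{L}\overline{\varphi})(x),$$
while Fatou's lemma gives symmetrically $(\mathcal{L}\underline{\varphi})(x)\le\underline{\varphi}(x)$, both for every $x\in X$. Thus $\mathcal{L}\overline{\varphi}-\overline{\varphi}\ge0$ and $\underline{\varphi}-\mathcal{L}\underline{\varphi}\ge0$ pointwise.

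To promote these one-sided inequalities to $\mu$-a.e. equalities I would invoke the stationarity of $\mu$: by equality (\ref{eqn00}) and $\mathcal{L}^*\mu=\mu$,
$$\int(\mathcal{L}\overline{\varphi}-\overline{\varphi})\,d\mu=\int\overline{\varphi}\,d\mathcal{L}^*\mu-\int\overline{\varphi}\,d\mu=0,$$
and likewise for $\underline{\varphi}$; a nonnegative function with null $\mu$-integral vanishes $\mu$-a.e., so $\mathcal{L}\overline{\varphi}=\overline{\varphi}$ and $\mathcal{L}\underline{\varphi}=\underline{\varphi}$ $\mu$-a.e. Now Lemma \ref{lemmaInvariantFunctions2} applied to $\overline{\varphi}$ shows $\{\overline{\varphi}>\alpha\}$ is $\mathcal{L}$-invariant $\mu$-a.e., and applied to $\underline{\varphi}$ shows $\{\underline{\varphi}<\beta\}$ is $\mathcal{L}$-invariant $\mu$-a.e. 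Finally, writing invariance in the form $\chi_{(\cdot)}(x)=P(x,\cdot)$ of Remark \ref{RemarkInvariantSetsWithP(x,A)}, a short case analysis (for $\mu$-a.e.\ $x$: if $P(x,C)=P(x,B)=1$ then $P(x,C\cap B)\ge 1-P(x,C^c)-P(x,B^c)=1$; if either probability is $0$ then $P(x,C\cap B)=0$) shows that the intersection of two $\mu$-a.e.\ $\mathcal{L}$-invariant sets is again $\mathcal{L}$-invariant $\mu$-a.e., which gives the claim for $A$.

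The main obstacle is precisely the invariance of $\overline{\varphi}$ and $\underline{\varphi}$: because $\mathcal{L}$ is an integral operator it does not commute with $\limsup$ or $\liminf$, so Fatou only delivers a one-sided pointwise inequality; the decisive point is that stationarity of $\mu$ forces the integral of the sign-definite defect to vanish, upgrading the inequality to an almost-everywhere equality.
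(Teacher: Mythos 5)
Your proposal is correct, and its skeleton coincides with the paper's: both proofs introduce the functions $\psi_2 := \limsup_n \frac{1}{n}\sum_{j=0}^{n-1}\mathcal{L}^j\varphi$ and $\psi_1 := \liminf_n \frac{1}{n}\sum_{j=0}^{n-1}\mathcal{L}^j\varphi$ (your $\overline{\varphi}$ and $\underline{\varphi}$), apply Lemma \ref{lemmaInvariantFunctions2} to conclude that $C_{\alpha}=\{\psi_2>\alpha\}$ and $B_{\beta}=\{\psi_1<\beta\}$ are $\mathcal{L}$-invariant $\mu$-a.e., and finish with the identical case analysis on $P(x,\cdot)$ showing that the intersection $A=C_{\alpha}\cap B_{\beta}$ of two $\mu$-a.e. invariant sets is again $\mu$-a.e. invariant. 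The genuine difference lies in the first step: the paper disposes of the $\mathcal{L}$-invariance of $\psi_1,\psi_2$ with the single sentence ``it is standard to check'', whereas you actually prove it --- the telescoping identity $\mathcal{L}f_n=f_n+\frac{1}{n}(\mathcal{L}^n\varphi-\varphi)$ for the averages $f_n$, Fatou and reverse Fatou against the kernels $P(x,\cdot)$ to obtain the pointwise one-sided inequalities $\psi_2\le\mathcal{L}\psi_2$ and $\mathcal{L}\psi_1\le\psi_1$, and then stationarity via equality (\ref{eqn00}) to force the sign-definite defects to vanish $\mu$-a.e. This is more than pedantry: in the deterministic case the $\limsup$ of Birkhoff averages is exactly invariant pointwise, because composition with $T$ commutes with $\limsup$; here $\mathcal{L}$ is an integral operator and does not commute with $\limsup$, so the invariance of $\psi_1,\psi_2$ holds only $\mu$-a.e. and genuinely requires the hypothesis $\mathcal{L}^*\mu=\mu$ --- a hypothesis the paper's ``standard'' claim silently consumes (note the paper even omits the $\mu$-a.e. qualifier at that point). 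Your writeup identifies exactly where stationarity enters, which is the one place where the paper's proof is incomplete as written, and it also supplies the measurability bookkeeping (each $f_n\in L_{\infty}$, hence so are the $\limsup$/$\liminf$) needed to apply Lemma \ref{lemmaInvariantFunctions2} legitimately.
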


\begin{proof} It is standard to check that the following real functions $$
 \psi_1 (x)  := \liminf_{n \rightarrow + \infty} \frac{1}{n} \sum_{j= 0}^{n-1} ({\mathcal L}^j \varphi)(x), \ \ \ \ \ \ \
\psi_2 (x)  :=  \limsup_{n \rightarrow + \infty} \frac{1}{n} \sum_{j= 0}^{n-1} ({\mathcal L}^j \varphi)(x),$$
are $ {\mathcal L} $-invariant. Thus, applying Lemma \ref{lemmaInvariantFunctions2} the sets
$$C_{\alpha} := \{x \in X \colon \ \ \psi_2 > \alpha\},  \ \ \ \ \
 B_{\beta} := \{x \in X \colon \ \ \psi_1 < \beta\},$$
satisfy
$$\chi_{C_{\alpha}}(x) = P(x, C_{\alpha}) \ \ \ \mu\mbox{-a.e. } x \in X, \ \ \ \ \ \ \ \ \ \chi_{B_{\beta}}(x) = P(x, B_{\beta}) \ \ \ \mu\mbox{-a.e. } x \in X.$$
On the one hand, for $\mu$-a.e.   $x \in C_{\alpha} \cap B_{\beta}$, we have     $\chi_{C_{\alpha}} = P (x, C_{\alpha}) = 1$  and  $\chi_{B_{\beta}} = P (x, B_{\beta}) = 1$. Since the  intersection of two sets of probability  1 also has probability 1, we deduce that $$P(x, C_{\alpha} \cap B_{\beta}) = 1 \ \ \mbox{ for } \mu\mbox{-a.e. } x \in C_{\alpha} \cap B_{\beta}.$$
On the other hand, for $\mu$-a.e.   $x \not \in C_{\alpha} \cap B_{\beta}$, we have     $\chi_{C_{\alpha}} = P (x, C_{\alpha}) = 0$  or $\chi_{B_{\beta}} = P (x, B_{\beta}) = 0$. The intersection of two sets, when at least one of them has zero probability, also has zero probability. We deduce that
$$P(x, C_{\alpha} \cap B_{\beta}) = 0 \ \ \mbox{ for } \mu\mbox{-a.e. }  x \not \in C_{\alpha} \cap B_{\beta}.$$
Finally, observe that $A  = C_{\alpha} \cap B_{\beta}$. We conclude that
 $\chi_{A } (x) = P(x, A ) \ \ \mbox{ for } \mu\mbox{-a.e. }  x \in X,$
ending the proof of Lemma \ref{lemmaMainErgodicTheorem}.
\end{proof}

\subsection
{\bf Proof of Theorem \ref{TheoremMainErgodic}.}
 
\begin{proof} Due to the linearity of the transfer operator ${\mathcal L}$, it is enough to prove Theorem \ref{TheoremMainErgodic} for real functions $\varphi \in L_{\infty}$. Denote   $ \displaystyle \varphi_n  :=  \sum_{j= 0}^{n-1} {\mathcal L}^j \varphi.$
For any pair of rational numbers $\alpha$ and $ \beta$  such that $\alpha > \beta$, we construct the set
$$A_{\alpha, \beta}= \{x \in X \colon \   \limsup_{n \rightarrow + \infty} \frac{1}{n}  \varphi_n (x)  >   \alpha, \ \  \liminf_{n \rightarrow + \infty} \frac{1}{n}  \varphi_n (x) < \beta\}.$$
Applying Lemma \ref{lemmaMainErgodicTheorem}, the set $A_{\alpha, \beta}$ is ${\mathcal L}$-invariant $\mu$-a.e.
Besides, if $x \in A_{\alpha, \beta}$, then $\sup_{n \geq 1}  { \varphi_n (x)}/{n} > \alpha $ and  $\inf_{n \geq 1} \varphi_n (x)/n  < \beta $. Thus, applying Corollaries \ref{corollaryMaximalErgTheo} and \ref{corollaryMaximalErgTheo2}, we obtain:
$$\alpha  \cdot \mu (A_{\alpha, \beta}) \leq  \int_{A_{\alpha, \beta}} \varphi \leq \beta \cdot \mu (A_{\alpha, \beta}). $$
Since $\alpha > \beta$, we deduce that
 $\mu(A_{\alpha, \beta}) = 0.$
The set of all the pair of rational numbers $\alpha$ and $ \beta$ such that $\alpha > \beta$ is countably infinite. Thus,
 $ \displaystyle \mu \ \Big( {\bigcup}_{  \alpha, \beta \in {\mathbb{Q}}, \   { \alpha > \beta}} \ \ \  {A_{  \alpha, \beta} } \Big)  = 0.$
Finally, we observe that
$${\bigcup}_{   \alpha, \beta \in {\mathbb{Q}}, \   { \alpha > \beta}} \ \ \  {A_{  \alpha, \beta} }  = \Big \{x \in X \colon \ \ \lim_{n \rightarrow +\infty} \frac{1}{n} \varphi_n (x) \mbox{ does not exists}\Big\}. $$
to conclude that
 $\displaystyle \lim_{n \rightarrow + \infty} \frac{1}{n} \varphi_n (x) \ \ \mbox{ exists } \mu\mbox{-a.e. } x \in X,$
ending the proof of Theorem \ref{TheoremMainErgodic}.
\end{proof}

\section{Ergodic Measures} \label{sectionErgodicMeasures}

\begin{Proposition}
\label{propositionErgodMeasuresInvariantFunctions}
A probability measure $\mu$ that is fixed by the operator ${\mathcal L}^*$ is ergodic   if and only if   any function $\varphi \in L_{\infty}$ that is $\mu$-a.e. ${\mathcal L}$-invariant   is constant $\mu$-a.e.
\end{Proposition}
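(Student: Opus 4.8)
The plan is to establish the two implications of the biconditional separately. The reverse implication is the quick one, so I would handle it first. Assume that every $\mu$-a.e. ${\mathcal L}$-invariant function in $L_\infty$ is constant $\mu$-a.e., and let $A \in {\mathcal A}$ be $\mu$-a.e. ${\mathcal L}$-invariant, i.e. $\chi_A = {\mathcal L}\chi_A$ $\mu$-a.e. Then $\chi_A \in L_\infty$ is a $\mu$-a.e. ${\mathcal L}$-invariant function, hence $\mu$-a.e. equal to some constant $c$; since $\chi_A$ takes only the values $0$ and $1$, necessarily $c \in \{0,1\}$, and therefore $\mu(A) = c \in \{0,1\}$. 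By Definition \ref{definitionErgodic} this is exactly the ergodicity of $\mu$.

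For the forward implication I would assume $\mu$ ergodic and first reduce to real-valued functions. Since ${\mathcal L}$ is linear and sends real functions to real functions (Definition \ref{DefinitionTransferOperatorL}.1), writing $\varphi = u + iv$ and comparing real and imaginary parts in the equality ${\mathcal L}\varphi = \varphi$ shows that $u$ and $v$ are each $\mu$-a.e. ${\mathcal L}$-invariant; so it suffices to prove that a real $\varphi \in L_\infty$ with ${\mathcal L}\varphi = \varphi$ $\mu$-a.e. is constant $\mu$-a.e. The essential input here is Lemma \ref{lemmaInvariantFunctions}: for every real $\alpha$ the superlevel set $A_\alpha := \{x \in X : \varphi(x) \geq \alpha\}$ is $\mu$-a.e. ${\mathcal L}$-invariant, so ergodicity forces $\mu(A_\alpha) \in \{0,1\}$ for all $\alpha$.

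The main obstacle is then to upgrade this level-set dichotomy into the statement that $\varphi$ equals a single constant $\mu$-a.e. I would introduce the threshold $c := \sup\{\alpha \in \mathbb{R} : \mu(A_\alpha) = 1\}$, which is finite because the boundedness of $\varphi$ gives $\mu(A_\alpha) = 1$ for $\alpha$ sufficiently negative and $\mu(A_\alpha) = 0$ for $\alpha$ sufficiently positive. Since $\alpha \mapsto \mu(A_\alpha)$ is non-increasing and valued in $\{0,1\}$, it follows that $\mu(A_\alpha) = 1$ for every $\alpha < c$ and $\mu(A_\alpha) = 0$ for every $\alpha > c$. Applying continuity of the probability measure from above to the decreasing family $A_{c-1/n} \downarrow \{\varphi \geq c\}$ gives $\mu(\{\varphi \geq c\}) = 1$, and continuity from below applied to the increasing family $A_{c+1/n} \uparrow \{\varphi > c\}$ gives $\mu(\{\varphi > c\}) = 0$. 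Subtracting, $\mu(\{\varphi = c\}) = 1$, so $\varphi = c$ $\mu$-a.e., which finishes the forward implication and hence the proof.
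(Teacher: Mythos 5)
Your proof is correct and follows essentially the same route as the paper's: the reverse implication via $\chi_A$, and the forward implication via Lemma \ref{lemmaInvariantFunctions} applied to the superlevel sets $A_\alpha$, the threshold $c = \sup\{\alpha : \mu(A_\alpha)=1\}$, and a monotone-convergence-of-measures argument to pin $\varphi = c$ $\mu$-a.e. The only difference is cosmetic: you conclude by continuity of $\mu$ from above and below on the sets $A_{c\mp 1/n}$ where the paper intersects the sets $\{c-1/n \le \varphi < c+1/n\}$, and you make explicit the reduction from complex to real $\varphi$, which the paper leaves implicit.
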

\begin{proof} First, assume that any function $\varphi \in L_{\infty}$ that is $\mu$-a.e. ${\mathcal L}$-invariant is constant $\mu$-a.e. Let us prove that $\mu$ is ergodic according to Definition \ref{definitionErgodic}. Consider a $\mu$-a.e. $ {\mathcal L}$-invariant  set  $A \subset M$, Equivalently, $\chi_A$ is a $\mu$-a.e. ${\mathcal L}$-invariant function. Thus, it is constant $\mu$-a.e. Since $\chi_A$ can take only the values 1 or 0, we deduce that either $\chi_A(x) = 0$ for $\mu$-a.e. $x \in X$, or $\chi_A(x) = 1$ for $\mu$-a.e. $x \in X$. In other words, either $\mu(A) = 0$ or $\mu(A) = 1$, proving that $\mu$ is ergodic.

 Conversely, assume that $\mu$ is ergodic according to Definition \ref{definitionErgodic}, and consider any function $\varphi \in L_{\infty}$  that is ${\mathcal L}$-invariant $\mu$-a.e. Denote
 $$A_{\alpha} := \{x \in X \colon \ \varphi(x) \geq \alpha\}. $$
 Applying Lemma \ref{lemmaInvariantFunctions} the set $A_{\alpha}$ is ${\mathcal L}$-invariant $\mu$-a.e. So, from Definition \ref{definitionErgodic}, we deduce that $\mu(A_{\alpha}) \in \{0,1\}$ for all $\alpha \in \mathbb{R}$. By construction $\mu(A_{\alpha})$ is non increasing with $\alpha$, and it is zero for all values of $\alpha$ large enough (because $\varphi$ is bounded). Thus, there exists
 $$k := \sup \{\alpha \in \mathbb{R}: \   \mu(A_{\alpha}) = 1\} \in \mathbb{R}.$$
 We deduce that $\mu(A_{k + \epsilon}) = 0$ and $ \mu(A_{k - \epsilon}) = 1$ for all $\epsilon >0$. So
 $$\mu\Big((X \setminus A_{k + \epsilon}) \bigcap A_{k - \epsilon}\Big) = 1 \ \ \forall \ \epsilon >0.$$
 In other words: $$\mu\Big( \Big\{ x \in X \colon \ k - \epsilon \leq \varphi (x)  < k + \epsilon \Big\}\Big) = 1 \ \ \ \forall \ \epsilon >0. $$ In particular,
 $$\mu\Big( \Big\{ x \in X \colon \ k - \frac{1}{n} \leq \varphi (x)  < k + \frac{1}{n} \Big\}\Big) = 1 \ \ \ \forall \ n \geq 1. $$
 Then, $$ \mu\Big( \bigcap_{n \geq 1} \Big\{ x \in X \colon \ k - \frac{1}{n} \leq \varphi (x)  < k + \frac{1}{n} \Big\}\Big) = 1,$$
 or equivalently $\varphi(x) = k$ for $\mu$-a.e. $x \in X$. We have proved Proposition \ref{propositionErgodMeasuresInvariantFunctions}.
\end{proof}

\begin{Corollary} \label{CorollaryMainErgodicTheorem}
{\bf (Ergodic Theorem for ${\mathcal L}^*$ ergodic measures)}

If $\mu$ is ergodic for the operator ${\mathcal L}^*$, then  for all $\varphi \in L_{\infty}$:
$$\lim_{n \rightarrow + \infty } \frac{1}{n} \sum_{j= 0}^{n-1} ({\mathcal L}^j \varphi)(x) = \int \varphi \, d \mu \ \ \ \mu\mbox{-a.e. } x \in X.$$
\end{Corollary}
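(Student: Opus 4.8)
The plan is to combine the almost-everywhere existence of the time averages, already guaranteed by Theorem \ref{TheoremMainErgodic}, with the characterization of ergodicity in Proposition \ref{propositionErgodMeasuresInvariantFunctions}. By linearity of ${\mathcal L}$ and of the integral it suffices to treat a real bounded function $\varphi$. Since $\mu$ is ergodic it is in particular fixed by ${\mathcal L}^*$ (Definition \ref{definitionErgodic}), so Theorem \ref{TheoremMainErgodic} applies and the limit $\widetilde\varphi(x) := \lim_{n} \frac{1}{n} \sum_{j=0}^{n-1}({\mathcal L}^j\varphi)(x)$ exists for $\mu$-a.e. $x$. Setting $\widetilde\varphi$ equal to $0$ on the $\mu$-null exceptional set, it is a bounded function coinciding $\mu$-a.e. with a measurable function, hence lies in $L_\infty$.

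The crucial step is to show that $\widetilde\varphi$ is ${\mathcal L}$-invariant $\mu$-a.e. Writing $S_n := \frac{1}{n}\sum_{j=0}^{n-1}{\mathcal L}^j\varphi$, a shift of the summation index gives the identity ${\mathcal L} S_n = S_n + \frac{1}{n}({\mathcal L}^n\varphi - \varphi)$; since $\|{\mathcal L}\|=1$ forces $\|{\mathcal L}^n\varphi\|_\infty \le \|\varphi\|_\infty$, the error term tends to $0$ uniformly, whence ${\mathcal L} S_n \to \widetilde\varphi$ $\mu$-a.e. It remains to pass ${\mathcal L}$ through the limit, i.e. to see that ${\mathcal L} S_n \to {\mathcal L}\widetilde\varphi$. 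This is the main obstacle, because $\mu$-a.e. convergence of $S_n$ does not a priori transfer to $P(x,\cdot)$-a.e. convergence for a fixed $x$, while $({\mathcal L} S_n)(x) = \int S_n\, dP(x,\cdot)$. I would resolve it by letting $N$ denote the $\mu$-null set where $S_n$ fails to converge and computing $\int P(x,N)\,d\mu(x) = \int {\mathcal L}\chi_N\, d\mu = \int \chi_N\, d{\mathcal L}^*\mu = \mu(N) = 0$, using equality (\ref{eqn00}) and ${\mathcal L}^*\mu=\mu$. Hence $P(x,N)=0$ for $\mu$-a.e. $x$, so $S_n \to \widetilde\varphi$ holds $P(x,\cdot)$-a.e.; dominated convergence with respect to $P(x,\cdot)$ then yields $({\mathcal L} S_n)(x) \to ({\mathcal L}\widetilde\varphi)(x)$ for $\mu$-a.e. $x$. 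Comparing the two limits of ${\mathcal L} S_n$ gives ${\mathcal L}\widetilde\varphi = \widetilde\varphi$ $\mu$-a.e. Alternatively, one may simply invoke the ${\mathcal L}$-invariance of the $\limsup$ function already asserted in the proof of Lemma \ref{lemmaMainErgodicTheorem}, since on the full-measure set where the limit exists it equals that $\limsup$.

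With invariance in hand, Proposition \ref{propositionErgodMeasuresInvariantFunctions} together with the ergodicity of $\mu$ forces $\widetilde\varphi$ to equal a constant $c$ for $\mu$-a.e. $x$. To identify $c$, I integrate: by (\ref{eqn00}) and ${\mathcal L}^*\mu = \mu$ one has $\int {\mathcal L}^j\varphi\,d\mu = \int \varphi\, d{{\mathcal L}^*}^j\mu = \int\varphi\,d\mu$ for every $j$, so $\int S_n\, d\mu = \int\varphi\,d\mu$ for all $n$. Since $|S_n|\le\|\varphi\|_\infty$, the dominated convergence theorem gives $\int \widetilde\varphi\, d\mu = \int\varphi\,d\mu$, and because $\widetilde\varphi = c$ $\mu$-a.e. this forces $c = \int\varphi\,d\mu$. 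Thus $\widetilde\varphi(x) = \int\varphi\,d\mu$ for $\mu$-a.e. $x$, which is the claim; the complex-valued case then follows by applying the real case to the real and imaginary parts of $\varphi$.
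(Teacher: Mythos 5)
Your proposal is correct and follows essentially the same route as the paper's own proof: existence of the limit $\widetilde\varphi$ from Theorem \ref{TheoremMainErgodic}, $\mu$-a.e. ${\mathcal L}$-invariance of $\widetilde\varphi$, constancy via Proposition \ref{propositionErgodMeasuresInvariantFunctions} and ergodicity, and identification of the constant by integrating and using ${\mathcal L}^*\mu=\mu$ together with dominated convergence. The one difference is that where the paper simply asserts the invariance of $\widetilde\varphi$ (relying implicitly on the ``standard to check'' claim about $\liminf$/$\limsup$ in the proof of Lemma \ref{lemmaMainErgodicTheorem}), you actually prove it --- via the identity ${\mathcal L}S_n = S_n + \frac{1}{n}({\mathcal L}^n\varphi-\varphi)$ and the null-set propagation argument $\int P(x,N)\,d\mu = \int \chi_N\, d{\mathcal L}^*\mu = \mu(N)=0$ --- which legitimately fills in the step the paper leaves unproved.
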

\begin{proof}
Applying Theorem \ref{TheoremMainErgodic} the above limit exists for $\mu$-a.e. $x \in X$. Denote it by $\widetilde \varphi (x)$. Since the function $\widetilde \varphi$ is ${\mathcal L}$-invariant $\mu$-a.e., and the measure $\mu$ is ergodic, we apply Proposition \ref{propositionErgodMeasuresInvariantFunctions} to deduce that there exists a constant $k$ such that $\widetilde \varphi (x) = k$ for $\mu$-a.e. $x \in X$. Now, it is enough to check that $k= \int \varphi \, d \mu$. In fact, by the dominated convergence theorem, we have
$$\int \widetilde \varphi \, d \mu = \int \lim _{ n \rightarrow + \infty} \frac{1}{n} \sum_{j = 0}^{n-1}  ({\mathcal L}^j  \varphi)(x) \, d \mu(x) =  \lim _{ n \rightarrow + \infty} \frac{1}{n} \sum_{j = 0}^{n-1} \int {\mathcal L}^j  \varphi \, d \mu. $$
But taking into account that $\mu$ is ${\mathcal L}^*$-invariant, we have $$\int {\mathcal L}^j \varphi \, d \mu  = \int \varphi \, d {{\mathcal L}^*}^j \mu = \int \varphi \, d \mu. $$
Therefore,
$$ \int \widetilde \varphi \, d \mu = \lim _{ n \rightarrow + \infty} \frac{1}{n} \sum_{j = 0}^{n-1} \int   \varphi \, d \mu = \int \varphi \, d \mu. $$
Finally, since  $\widetilde \varphi = k $ for $\mu$-a.e. $x \in K$, the above equality implies that $k = \int \varphi \, d \mu$, as wanted.
\end{proof}

\subsection
{\bf End of the Proof of Theorem \ref{MainTheorem1}}

\begin{proof}
For each fixed natural value of $p \geq 1 $, the  operator ${\mathcal L}^p$ transforms continuous functions into continuous functions. Besides, it is positive, bounded by 1, and ${\mathcal L}^p (1) = 1$. So, ${\mathcal L}^p$ satisfies Definition \ref{DefinitionTransferOperatorL} and is a transfer operator. Besides, after applying equality (\ref{eqn000}) $p$ times, we deduce that the dual transfer operator of ${\mathcal L}^p$ is ${{\mathcal L}^*}^p$. So, we can apply all the results proved along this paper to ${\mathcal L}^p $ instead of ${\mathcal L}$, and to ${{\mathcal L}^*}^p  $ instead of ${\mathcal L}^*$.
In particular,  Theorem \ref{MainTheorem1} follows from Theorem \ref{TheoremMainErgodic} and Corollary \ref{CorollaryMainErgodicTheorem}, using  ${\mathcal L}^p $ instead of ${\mathcal L}$, and  ${{\mathcal L}^*}^p  $ instead of ${\mathcal L}^*$.
\end{proof}

\section{Conclusions and Further Research} 

We have proved the Maximal Ergodic Theorem \ref{TheoremMaximalErgodic} and the Pointwise Ergodic Theorem \ref{MainTheorem1} of Periodic Measures, for the transfer operator that is associated to a Markovian stochastic dynamical system, obtained by adding noise with any probability distribution  at each iterate of a deterministic continuous system. As a consequence, we have also proved Corollaries \ref{corollaryMaximalErgTheo} and  \ref{corollaryMaximalErgTheo2}, which provide different statements of the  Maximal Ergodic Theorem for noisy systems. Besides, we have proved Kakutani's Ergodic Theorem \ref{TheoremMainErgodic}, also  as a consequence of them.

The relevance of the ergodic theorems for the transfer operators that are proved along this paper, is the extension they provide to stochastic markovian processes, of the classical pointwise ergodic theorems for deterministic systems. They hold in particular for noisy systems, i.e. for dynamical systems which are randomly perturbed by noise, independently on the probabilistic distribution of the noise. The interpretation of their meaning allows their application to other sciences and engineering, although the results are proven by pure mathematics.   In fact, a mathematical model of  the dynamics of certain physical phenomenon or human-made technology, may be not purely deterministic.   \lq\lq The real world\rq\rq \  which is modeled, for instance by a differential equation, usually behaves (or is perceived  by the observer or is constructed by the engineer), as noisy, exhibiting random perturbations,  more or less near a supposed deterministic model. This noise may be due to multiple causes. For instance,   the physical phenomenon may  need  much more complexity to be completely described than the  variables and parameters that are considered in the simplified  mathematical equations. Instead of taking    its full complexity as it is, it may be convenient to add  certain type of random perturbations to a simplified mathematical model.  Also  the unavoidable inexactitude of the experimental data  and of the observations, from which the mathematical deterministic model was designed, may require the consideration of a noisy dynamics.  Finally, the noisy dynamical systems, and the application of the ergodic theorems that were proved along this paper,  may   mathematically explain better  some physical phenomena, just because epistemologically, the intrinsic nature of them may be not   deterministic, but have predominant random components.
 We cite   Kifer in \cite[Introduction, p. 1]{Kifer}:

  \vspace{.3cm}

  \em \lq\lq  Mathematicians often face the question to which extent mathematical models describe processes of the real world. These models are derived from experimental data, hence they describe real phenomena only approximately. ... Global stability in the presence of noise ... can be described as recovering parameters of dynamical systems from the study of their random perturbations. ... In this way  \em (they) \em can be considered ... having physical sense.\rq\rq \em

\vspace{.3cm}

Finally, we propose some related subjects for futher research:

\noindent{\bf a) } Estimates for the rates of convergence of the time averages for the pointwise convergence ergodic theorems: the abstract tools used in the proof of the ergodic theorems for the transfer operator of noisy systems, may be also used  to extend to stochastic dynamical systems the estimates of the rate of convergence and   deviation, already obtained for deterministic systems  for instance in \cite{Kachurovskii} and \cite{Rey-Bellet-Young}.

\noindent{\bf b) } Relationships between the ergodic components of stationary or periodic measures that satisfy Theorems \ref{MainTheorem1} and \ref{TheoremMainErgodic}, and the spectral properties of the invariant measures supported on the attractor: we propose   the research of the possible extensions to stochastic dynamical systems of the mixing properties, and of the almost periodicity or asymptotic periodicity of the transfer operators, as for instance is proved in \cite{Mezic} and \cite{sandro}  in some particular cases.

\subsubsection{Acknowledgments.}  

The author thanks CSIC of Universidad de la Rep\'{u}blica (Uruguay) for the partial financial support of Project of the group "Sistemas Din\'{a}micos".


\begin{thebibliography} 
{22}
\bibitem{Birkhoff} G.D. Birkhoff, "Proof of the Ergodic Theorem", {\it Proc. Nat. Acad. Sciences}, vol. 17, pp. 656--660, 1931.
\bibitem{Mane} R. Ma\~{n}\'{e}, {\it Ergodic Theory and Differentiable Dynamics}, Springer-Verlag, Berlin, 1987.
\bibitem{Katok-Hasselblatt} A. Katok, B. Hasselblatt, {\it Introduction to the Modern Theory of Dynamical Systems},  Cambridge Univ. Press, Cambridge,   1999.
\bibitem{Cornfed-Fomin-Sinai} I.P. Cornfeld, S.V. Fomin, Ya. G. Sinai, {\it Ergodic Theory}, Springer-Verlag, Berlin, Re-edition 2012.
\bibitem{Kingman} J.F. Kingman,  "Subadditive Ergodic Theory", {\it Annals Prob.}, vol. 1. pp. 889--909, 1973.
\bibitem{Gouezel-Karlsson} S. Gou\"{e}zel, A. Karlsson, "Subadditive and Multiplicative Ergodic Theorems", {\it ArXiv: }  1509.07733 [math.DS], 2015.
\bibitem{Oseledets} V.I. Oseledets, "A multiplicative ergodic theorem. Lyapunov characteristic numbers for dynamical systems",  {\it Trans. Moscow Math. Soc.}, vol. 19, pp. 197--231, 1968.

 
\bibitem{Filip} S. Filip, "Notes on the multiplicative ergodic theorem",  {\it Ergodic Th. \& Dyn. Sys.}, Published on line, 2017.

\bibitem{yo} E. Catsigeras,  {\it Teor\'{\i}a Erg\'{o}dica de los Sistemas Din\'{a}micos Discretos} (in Spanish), Univ. de la Rep\'{u}blica,  Montevideo, 2013.



\bibitem{Eisner}T. Eisner, B. Farkas, M. Haase, R. Nagel, {\it Operator Theoretic Aspects of Ergodic Theory}, Springer, Heidelberg-New York, 2015.

\bibitem {Eisner-Kunszenti-Kovacs} T. Eisner, D. Kunszenti-Kov\'{a}cs, "On the entangled ergodic theorem", {\it Ann. Scuola Norm. Super. Pisa Cl. Sci.}, vol XII, pp. 141--156, 2013.


\bibitem{Tempelman}  A. Tempelman, {\it Ergodic theorems for group actions: Informational and Thermodynamical Aspects}, Springer, Dorchrecht, Re-Edition 2013.

\bibitem{Karlsson-Ledrappier} A. Karlsson, F. Ledrappier, "Noncommutative Ergodic Theorems", {\it ArXiv} 1110.6847 [math.DS],  2011.

\bibitem{Rugh-Thomine} H.H. Rugh, D. Thomine, "Ratio ergodic theorems: from Hopf to Birkhoff and Kingman", {\it ArXiv} 1802.08439, 2018.

\bibitem{Aaronson} J. Aaronson, {\it An Introduction to Infinite Ergodic Theory}, Math. Surveys and Monographs Vol. 50, Amer. Math. Soc., Providence, 1997.

\bibitem{Roy} E. Roy, "Poisson suspensions and infinite ergodic theory", {\it Erg. Th. \& Dyn. Sys.}, vol. 29, pp. 667--683, 2009.

\bibitem{Austin} T. Austin, "On the norm convergence of non-conventional ergodic averages", {\it Erg. Th. \& Dyn. Sys.}, vol. 30, pp. 321--338, 2010.

\bibitem{delaRue} T. de la Rue, "Notes on Austin's multiple ergodic theorem", {\it ArXiv} 0907.0538, 2009.

\bibitem{Gonzalez-Tokman-Quas} C. Gonz\'{a}lez-Tokman, A. Quass, "A semi-invertible operator Oseledets theorem", {\it Erg. Th. \& Dyn. Sys.}, vol. 34, pp. 1230--1272, 2014.

     \bibitem{Kakutani} S. Kakutani,  "Random ergodic theorems and Markoff processes with a stable distribution", {\it. Proc 2nd. Berkeley Symp.}, pp. 247--261, 1951.

\bibitem{Yosida} K. Yosida, {\it Functional Analysis}, Springer-Verlag, Heidelberg, 1978.

    \bibitem{Kifer} Y. Kifer: {\it  Ergodic theory of random transformations}, Birkh\"{a}user, Boston, 1986.

  \bibitem{Arnold} L. Arnold: {\it Random dynamical systems.} Springer-Verlag, Heidelberg--Berlin, 1998.

  \bibitem{Kachurovskii} A.G. Kachurovskii, V.V. Sedalishchev, "Constants in estimates for the rates of convergence in von Neumann's and Birkhoff's ergodic theorems", {\it Sbornik: Mathematics}, vol. 202, 2011.

  \bibitem{Rey-Bellet-Young} L. Rey-Bellet, L.S. Young, "Large deviations in non-uniformly hyperbolic dynamical systems", {\it Erg. Th. \& Dyn. Sys.}, vol. 28, pp. 587--612, 2008.

  \bibitem{Mezic} I. Mezic, "Spectral Properties of Dynamical Systems, Model Reduction and Decompositions", {\it Nonlinear Dynamics}, vol. 41, pp. 309--325, 2005.

   \bibitem{sandro} W. Bahsoun, H. Hu, S. Vaienti, "Pseudo-Orbits, Stationary Measures and Metastability", {\it  Dynamical Systems}, vol.    29,  pp. 322--336, 2014.
       
  
 
\end{thebibliography}


\end{document}